\tikzstyle arrowstyle=[scale=1]
\tikzstyle directed=[postaction={decorate,
decoration={markings,mark=at position .65 with {\arrow[arrowstyle]{stealth}}}}]
\newcolumntype{L}{>{$}l<{$}} 
\newcolumntype{C}{>{$}c<{$}}
\newtheorem{theorem}{Theorem}[section]
\newtheorem{lemma}[theorem]{Lemma}
\newtheorem{cor}[theorem]{Corollary}
\newtheorem{prop}[theorem]{Proposition}
\theoremstyle{definition}
\newtheorem{definition}[theorem]{Definition}
\newtheorem{example}[theorem]{Example}
\newtheorem{obs}[theorem]{Observation}
\newtheorem{notation}[theorem]{Notation}
\newtheorem{construction}[theorem]{Construction}
\theoremstyle{remark}
\newtheorem{remark}[theorem]{Remark}
\newtheorem{the context}[theorem]{The Context}
\newtheorem{question}[theorem]{Question}
\numberwithin{equation}{theorem}
\numberwithin{equation}{section}
\newcommand{\rank}{\operatorname{rank}}
\newcommand{\tor}{\operatorname{Tor}}
\newcommand{\im}{\operatorname{Im}}
\newcommand{\cone}{\operatorname{Cone}}
\newcommand{\Ker}{\operatorname{Ker}}
\newcommand{\ideal}[1]{\mathfrak{#1}}
\newcommand{\m}{\ideal{m}}
\newcommand{\bbz}{\mathbb{Z}}
\newcommand{\bbn}{\mathbb{N}}
\newcommand{\x}{\mathbf{x}}
\renewcommand{\geq}{\geqslant}
\renewcommand{\leq}{\leqslant}
\renewcommand{\ker}{\Ker}
\renewcommand{\hom}{\Hom}
\newcommand{\Hom}{\operatorname{Hom}}
\newcommand{\maps}[5]{\xymatrix{#1 \ar[r]^-{#3} & #2 \\
#4 \ar@{|->}[r] & #5 \\}}
\newcommand{\mfa}{\mathfrak{a}}
\def\w{\wedge}
\def\im{\operatorname{im}}
\newcommand{\dm}{\operatorname{DM}}
\newcommand{\fold}{\operatorname{Fold}}
\newcommand{\Char}{\operatorname{char}}
\newcommand{\Id}{\textrm{id}}
\begin{document}
\title{Differential Modules with Complete Intersection Homology}

\keywords{Differential modules, complete intersections, Koszul complex, DG-algebras, minimal free resolutions}

\subjclass{13D02, 13D07, 13C13}

\author{Maya Banks}

\author{Keller VandeBogert}
\date{\today}

\maketitle

\begin{abstract}
    Differential modules are natural generalizations of complexes. In this paper, we study differential modules with complete intersection homology, comparing and contrasting the theory of these differential modules with that of the Koszul complex. We construct a Koszul differential module that directly generalizes the classical Koszul complex and investigate which properties of the Koszul complex can be generalized to this setting. 
\end{abstract}

\section{Introduction}

A \emph{differential module} is a module equipped with a square-zero endomorphism. While initially introduced at least as far back as the classical treatise of Cartan and Eilenberg \cite[Chapter 4]{cartan2016homological}, differential modules have become a topic of recent interest in commutative algebra motivated in part by their connections to the Buchsbaum--Eisenbud--Horrocks and Carlsson conjectures \cite{avramov2007class,10.1007/BFb0072815, buchsbaum1977algebra, HARTSHORNE1979117}, the BGG correspondence and Tate resolutions \cite{brown2021tate}, and the representation theory of algebras (\cite{rouquier2006representation}, \cite{ringel2017representations}). Differential modules are a natural generalization of chain complexes, and their study can thus provide a novel perspective on familiar objects such as free resolutions. More generally, there is an ever-expanding literature focusing on the use of differential modules to provide new insight on old conjectures (see for instance \cite{boocher2010rank}, \cite{csenturk2019carlsson}, and \cite{iyengar2018examples}), and also on the development of a general theory of differential modules for their own sake (see \cite{stai2017differential}, \cite{wei2015gorenstein}, and \cite{xu2015gorenstein}, and the references therein).

Our work is motivated in particular by recent work of Brown and Erman \cite{brown2021minimal}, in which they extend the notion of a \emph{minimal free resolution} to differential modules. Moreover, they prove a theorem indicating that the classical theory of minimal free resolutions still plays a significant role in understanding the structure of minimal free resolutions of differential modules. In particular, they show that for a differential module $D$ with homology $H(D)$, there is a \emph{free flag} $F$ whose structure and differential are partially controlled by the minimal free resolution of $H(D)$ and where there is a quasi-isomorphism $F\to D$ (see Theorem \ref{thm:danMikeThm} for the precise statement). This result begs the question of whether properties of the minimal free resolution of the homology $H(D)$ can be `lifted' to the free flag $F$. Brown and Erman examined this in the case where the homology is a Cohen-Macaulay codimension 2 algebra. We explore this question in the case where $H(D)$ is generated by a regular sequence.
    
In the classical theory of minimal free resolutions, the Koszul complex is one of the most fundamental objects of study for the simple reason of its sheer ubiquity; it is well-known to be a minimal free resolution of ideals generated by regular sequences, but even for non-complete intersections, properties of the Koszul homology of an ideal make their appearance in relation to DG-algebra techniques, the study of Rees algebras, and in the construction of more subtle types of complexes. In this paper, we begin developing a parallel theory for differential modules by first constructing a differential module analog of the Koszul complex, directly generalizing the classical case. We also investigate which properties of the Koszul complex lift to the minimal free resolution of differential modules whose homology is a complete intersection. We ask three main questions, the first of which is the following:

\begin{question}\label{q1}
For $R$ a graded-local ring, what are the differential modules $D$ whose homology is equal to the residue field $R/\m$? More broadly, can we classify the differential modules with homology $R/I$ a complete intersection?
\end{question}

\begin{example}\label{ex:smallkoszul}
Let $S = k[x_1,\ldots, x_n]$ for $k$ a field. Let $D = S^4$ with differential given by the matrix
\[
\begin{pmatrix}
0 & x_1 & x_2 & 0\\
0 & 0 & 0 & -x_2\\
0 & 0 & 0 & x_1\\
0 & 0 & 0 & 0\\
\end{pmatrix}.
\]
This differential module has homology $S/(x_1, x_2)$. In fact, it is the differential module we obtain by taking the Koszul complex on the regular sequence $(x_1, x_2)$ and viewing it as a differential module whose underlying module is the sum of the free modules in the Koszul complex and whose differential is the direct sum of the Koszul differentials. However, we can alter the differential by adding a nonzero entry to the top right corner without changing the homology of the differential module. That is, we get a family of differential modules $D_f$ with the same underlying module and differential given by 
\[
\begin{pmatrix}
0 & x_1 & x_2 & f\\
0 & 0 & 0 & -x_2\\
0 & 0 & 0 & x_1\\
0 & 0 & 0 & 0\\
\end{pmatrix}.
\]
Furthermore, by \cite{brown2021minimal}, every differential module with homology $S/(x_1, x_2)$ admits a quasi-isomorphism from some such $D_f$. However, not all choices of $f$ yield nonisomorphic differential modules. For instance for $f\in (x_1, x_2)$ we can perform row and column operations to show that $D_f$ is isomorphic to our original $D$, but this is not the case if (for instance) $f=1$.
\end{example}

In the above example, we see the structure of the Koszul complex itself mirrored in the structure of the differential modules $D_f$. This motivates our next two questions, which have to do with how much of this structure is actually preserved when we pass from resolutions to differential modules.

\begin{question}\label{q2}
Each Koszul complex corresponds to a single element in a particular exterior algebra. For a differential module $D$ with complete intersection homology, does every free flag resolution of the type described in \cite{brown2021minimal} via a similar construction?
\end{question}
\begin{question}\label{q3}
The classical Koszul complex is well-known to admit the structure of a DG-algebra. Does the generalization of the Koszul complex to differential modules admit any kind of analogous structure?
\end{question}

Our results give a partial answer to Question \ref{q1}---while we do not give a classification of 
\emph{all} differential modules with complete intersection homology, we do find constraints on such differential modules and prove results that simplify the classification question. We show that with some additional hypotheses, Question \ref{q2} can be answered affirmatively, but that absent these hypotheses we can construct examples of free flags with complete intersection homology that are not of the ``expected" form. In contrast to the previous two questions, the answer to Question \ref{q3} seems to be a resounding ``no", and indicates that generalizations of the classical notions of DG-algebra/module structures for differential modules will require much subtler formulations. 

\subsection{Results}
Let $R$ be a graded local ring, $D$ a module over $R$ and $d: D\to D$ an $R$-module endomorphism that squares to 0. We define the homology of $D$ to be $H(D) = \Ker(d)/\im{d}$. If the underlying module $D$ has the form $\bigoplus_{i\geq 0}F_i$ where $F_i$ is free and the differential $d$ satisfies that $d(F_j)\subseteq \bigoplus_{j<i}F_i$ then we call $D$ a \emph{free flag}. Note that any bounded below free complex can automatically be considered as a free flag.  A core result of \cite{brown2021minimal} states that any differential module $D$ admits a quasi-isomorphism from a free flag $(F, d)$ where $F = \bigoplus_{i\geq 0}F_i$ where 
\[
F_0\xleftarrow{\delta} F_1\xleftarrow{\delta} F_2 \leftarrow \cdots
\]
is a minimal free resolution of $H(D)$ and the $d$ restricts to $\delta$ when considered as a map $F_i\to F_{i-1}$. In this case, we can represent $d$ via a block matrix 
\[
\begin{pmatrix}
0 &\delta & A_{2,0} & A_{3,0} & \cdots & A_{m,0} \\
0 & 0 & \delta & A_{3,1} & \cdots & A_{m,1}\\
\vdots &  & & \ddots & & \vdots\\
0 & 0 & 0 & 0 &\cdots & A_{m,m-2}\\
0 & 0 & 0 & 0 & \cdots &\delta \\
0 & 0 & 0 & 0 & \cdots & 0   
\end{pmatrix}
\]
where $A_{i,j}: F_i\to F_j$. We refer to free flags of this type as \emph{anchored free flags}. Since every differential module admits an anchored free flag resolution, it is both convenient and reasonable to focus on differential modules of this form. To classify anchored free flag resolutions amounts to classifying the possibilities for $A_{i,j}$ that yield nonisomorphic differential modules. Our first result simplifies this task in the case where the homology is a complete intersection.

\begin{theorem}\label{thm:intro1}
Let $D$ be a free flag differential module with homology $H(D)$ a complete intersection where the differential $d$ is given by a block matrix as above. Let $F$ be the minimal free resolution of $H(D)$ considered as a differential module. Then $D\cong F$ if and only if $\im A_{i,0}\subseteq \im\delta$ for all $i\geq 2$.
\end{theorem}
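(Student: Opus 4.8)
The plan is to view $D\cong F$ as the question of whether the ``non‑resolution'' blocks $A_{i,j}$ can be conjugated away, carried out one flag‑degree at a time. Write $I=(f_1,\dots,f_c)$ for the complete‑intersection ideal with $H(D)=R/I$, so that $F$ is the Koszul complex on $f_1,\dots,f_c$; viewing $F_0=R$, we have $\operatorname{im}\delta=I$ for $\delta\colon F_1\to F_0$. Let $d_F$ be the differential of $F$ as a differential module ($d_F|_{F_i}=\delta$), and write $d=d_F+A$ with $A=\sum_{i>j+1}A_{i,j}$, so that $A=A^{(2)}+A^{(3)}+\cdots$ where $A^{(r)}=\sum_i A_{i,i-r}$ collects the blocks $F_i\to F_{i-r}$, and ``$A$ has order $\ge r$'' means $A^{(s)}=0$ for $s<r$. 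Then $D\cong F$ precisely when some $R$-automorphism of $\bigoplus_i F_i$ conjugates $d$ to $d_F$. I will use two features of the Koszul complex: (a) every $\delta\colon F_k\to F_{k-1}$ has image in $I\,F_{k-1}$, since its entries lie among the $\pm f_\ell$; and (b) regularity of $f_1,\dots,f_c$ forces $\delta^{-1}(I^2F_{j-1})=I\,F_j$ for each $j\ge1$ (an element $\sum a_\ell f_\ell\in I^2$ has all $a_\ell\in I$). From (a), $\Hom_R(F,R/I)$ has zero differential, so $\Ext_R^r(R/I,R/I)=\Hom_R(F_r,R/I)$, and the class in $\Ext^r$ of a degree-$r$ cocycle $g$ of $\Hom_R(F,F)$ is just $F_r\xrightarrow{g_r}F_0\xrightarrow{\pi}R/I$. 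Squaring $d=d_F+A$ to zero shows that the lowest-order part $A^{(r)}$ is such a cocycle, with $(A^{(r)})_r=A_{r,0}$; hence $A^{(r)}$ is a coboundary $d_Fh-h\,d_F$ (with $h$ of order $r-1$) if and only if $\pi\circ A_{r,0}=0$, i.e.\ $\operatorname{im}A_{r,0}\subseteq\operatorname{im}\delta$. This equivalence is the hinge.

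For the implication $(\Leftarrow)$, assume $\operatorname{im}A_{i,0}\subseteq\operatorname{im}\delta$ for all $i\ge2$ and conjugate $d$ toward $d_F$ in order of increasing order of $A$. Suppose the current differential is $d_F+A$ with $A$ of order $\ge r$ and still satisfying the hypothesis. Then $A^{(r)}=d_Fh-h\,d_F$ for some $h$ of order $r-1$; modifying $h$ by a cocycle of $\Hom_R(F,F)$ — such cocycles can be prescribed arbitrarily in their bottom homological slot, again by (a) — one may arrange that $h$ has no component landing in $F_0$. Conjugating by $\mathrm{id}+h$ then leaves $d_F$ fixed and cancels $A^{(r)}$, raising the order of $A$; and because $h$ misses $F_0$, every map into $F_0$ in the new differential factors through $\delta\colon F_1\to F_0$ or through an old $A_{j,0}$, so $\operatorname{im}A_{i,0}\subseteq\operatorname{im}\delta$ persists. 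Since $F_i=0$ for $i>c$ the order of $A$ is bounded, so after finitely many steps $A=0$ and $D\cong F$.

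For $(\Rightarrow)$, suppose $\phi\colon D\xrightarrow{\ \sim\ }F$. The map $\varepsilon=\pi\circ\mathrm{proj}_{F_0}\colon F\to R/I$ is a morphism of differential modules and a quasi-isomorphism (using $\bar d_F=0$ from (a)); composing $\varepsilon\phi$ with multiplication by a unit of $R$ lifting $H(\phi)$ yields a morphism $q\colon D\to R/I$ of differential modules with $q|_{F_0}=\pi$. Writing $q=\sum_i q_i$, the identity $q\,d=0$ reads $q_{i-1}\delta+\sum_{j\le i-2}q_jA_{i,j}=0$ for all $i$; each $q_{i-1}\delta$ vanishes by (a) (the target $R/I$ is killed by $I$), so the $i=2$ relation gives $\pi\circ A_{2,0}=0$. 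One now inducts on $i$: at stage $i$, the relations from $d^2=0$ together with (a) and (b) yield $\operatorname{im}A_{i,j}\subseteq I\,F_j$ for $1\le j\le i-2$ — a short downward induction on $j$, using at each step that $\operatorname{im}A_{i-1,j-1}$ and $\operatorname{im}A_{l,j-1}$ already lie in $IF_{j-1}$, so the relevant composites land in $I^2F_{j-1}$ and (b) applies — whence every $q_jA_{i,j}$ with $j\ge1$ vanishes and the $i$-th relation forces $\pi\circ A_{i,0}=0$, i.e.\ $\operatorname{im}A_{i,0}\subseteq\operatorname{im}\delta$.

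The step I expect to be the main obstacle is the persistence claim in $(\Leftarrow)$ — keeping $\operatorname{im}A_{i,0}\subseteq\operatorname{im}\delta$ through each conjugation — which hinges on controlling the bottom homological slot of the null-homotopies $h$ by adjusting them with cocycles of $\Hom_R(F,F)$, a maneuver that itself depends on the Koszul shape via (a). Closely related, and also requiring care, is the propagation ``$\operatorname{im}A_{i,0}\subseteq\operatorname{im}\delta$ for all $i$'' $\Rightarrow$ ``$\operatorname{im}A_{i,j}\subseteq IF_j$ for all $i>j$'' used in $(\Rightarrow)$, whose proof is exactly where regularity of $f_1,\dots,f_c$ — the complete-intersection hypothesis, as opposed to mere cyclicity of $H(D)$ — genuinely enters.
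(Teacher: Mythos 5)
Your proposal is correct in substance but reaches the theorem by a genuinely different route than the paper. For the direction ``$\im A_{i,0}\subseteq\im\delta\Rightarrow D\cong\fold(F_\bullet)$'', the paper (Lemma \ref{lem:MayaLem}) cancels blocks by target index: column operations kill the whole top row at once, and then the $d^2=0$ relations plus exactness force $\im A_{i,1}\subseteq\im d_2$, and so on; you instead cancel diagonal by diagonal, observing that the lowest-order part $A^{(r)}$ is a cocycle in $\operatorname{Hom}_R(F,F)$ whose class in $\operatorname{Ext}^r_R(R/I,R/I)\cong\Hom_R(F_r,R/I)$ (using that the Koszul differentials vanish mod $I$) is exactly $\pi\circ A_{r,0}$, then conjugating by $\mathrm{id}+h$. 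This costs you the extra ``persistence'' step, which you handle correctly by normalizing $h$ to miss $F_0$ (your claim that a cocycle can be prescribed in its bottom slot is right: the lift exists because $u\circ d_r$ lands in $I=\im d_1$ and the Koszul complex is exact), and it also requires a routine sign-twist to reconcile unsigned matrix commutators with the signed Hom-complex differential, which you do not mention but which is harmless. For the converse, the paper (Lemma \ref{lem:theConverse} plus Theorem \ref{cor:CIequiv}) exploits the components $\phi_{j,0}$ of the isomorphism itself, comparing the $F_0$-components of $\phi d^D=d^F\phi$ and using that the Koszul entries lie in $I$; you instead use the isomorphism only through the augmentation $q=\varepsilon\phi\colon D\to R/I$ and then kill the cross terms $q_jA_{i,j}$, $j\geq 1$, by proving directly from $d^2=0$ and regularity (your fact (b), which does follow componentwise from the $j=1$ case as you indicate) that all higher blocks satisfy $\im A_{i,j}\subseteq IF_j$ --- a stronger intermediate statement than the paper needs, and one that uses the complete-intersection hypothesis more structurally. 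The one place where your write-up genuinely under-argues is the assertion that $q|_{F_0}$ equals $\pi$ up to a unit: this amounts to the claim that the class of $1\in F_0$ generates $H(D)$, which is true (any cycle of the anchored flag can be pushed into $F_0$ modulo boundaries using exactness of the Koszul complex in positive flag degrees, so $\overline{\phi_{0,0}(1)}=H(\phi)([1])$ is a unit of $R/I$; note no lift to a unit of $R$ is needed, a unit of $R/I$ suffices since the target is $R/I$), but it should be stated and proved rather than absorbed into ``multiplication by a unit of $R$ lifting $H(\phi)$''. With that patch and the sign bookkeeping, your argument is complete and both directions buy a little extra: the $(\Leftarrow)$ direction recasts the obstruction as an Ext class, and the $(\Rightarrow)$ direction isolates exactly where regularity, rather than mere cyclicity of $H(D)$, enters.
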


This result says that, in the case of a homology induced free flag with complete intersection homology, the property of being isomorphic to a minimal free resolution of the homology can be detected by only the first row of blocks in the differential. This further tells us that we can at least partially characterize the differential modules with given complete intersection homology $R/I$ by the choices of nonzero maps $F_i\to R/I$.

We prove Theorem \ref{thm:intro1} in Section \ref{sec:freeFlagsAndFolds}, along with a discussion of minimal free resolutions of differential modules with homology $k$. In particular, we also show via explicit construction that the total Betti numbers of a differential module with homology $k$ may be strictly smaller than the sum of the Betti numbers of $k$.

In Section \ref{sec:genKoszulCx}, we show how graded commutative algebras admitting divided powers can be used to construct differential modules. This allows us to construct a Koszul differential module--a family of differential modules generalizing the Koszul complex (see Construction \ref{cons:genericKoszul}). In much the same way as the Koszul complex provides a valuable source of examples in the study of minimal free resolutions, Koszul differential modules generate a large set of examples of free flag differential modules. Moreover, we prove the following theorem which shows that in certain cases we can guarantee that anchored free flags with complete intersection homology are isomorphic to the Koszul differential module.

\begin{theorem}[See Theorem \ref{thm:nonCHar2} for the more general statement]\label{thm:intro2}
Let $R$ be a Noetherian graded local ring with maximal ideal $\m$, $D$ a differential $R$-module with $H(D)$ a complete intersection and let $F=\bigoplus_{i\geq 0}F_i\to D$ be an anchored free flag resolution with differential $d^F$. If the image of $d^F$ in $F_0$ generates a complete intersection contained in $\m$, then $F$ is isomorphic to a Koszul differential module. 
\end{theorem}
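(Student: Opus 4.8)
The plan is to extract a Koszul-type structure from the anchored free flag $F$ by an inductive bootstrapping argument on the blocks $A_{i,j}$ of $d^F$, using as scaffolding the fact that the minimal free resolution underlying $F$ is a Koszul complex. Write $H(D) = R/(\underline{f})$ for a regular sequence $\underline{f} = f_1,\dots,f_c$ contained in $\m$; by hypothesis the image of $d^F$ in $F_0$ also generates a complete intersection $(\underline{g})$ inside $\m$. First I would reconcile these two sequences: since $(\underline{f}) = H_0(F) = F_0/\im(\text{lowest piece of } d^F)$ and both $(\underline{f})$ and $(\underline{g})$ are complete intersections cut out in $F_0 = R$, one shows $(\underline{f}) = (\underline{g})$ up to change of basis, so $c$ equals the codimension and $F_1,\dots,F_c$ together with the restricted differential $\delta$ form the Koszul complex $K^\bullet(\underline{f})$ (this uses that a minimal free resolution of a complete intersection is \emph{the} Koszul complex, essentially uniquely). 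This pins down all the ``diagonal'' data of the block matrix.

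Next I would invoke the divided-power/graded-commutative-algebra machinery of Section~\ref{sec:genKoszulCx} to phrase what needs to be proved: a Koszul differential module is, by Construction~\ref{cons:genericKoszul}, built from a single element (a ``Maurer--Cartan'' type datum) in the relevant exterior/divided-power algebra, and an isomorphism $F \cong$ (Koszul differential module) amounts to choosing a change of basis on $F = \bigoplus F_i$, compatible with the flag filtration, that transports $d^F$ to the canonical Koszul differential $d^K$. So the core of the argument is an obstruction-killing induction: assuming $d^F$ and $d^K$ agree (after an automorphism of $F$ respecting the flag) on the truncation $\bigoplus_{i\le n} F_i$, I would show the discrepancy $d^F - d^K$ restricted to $F_{n+1}$ is a cycle in an appropriate Hom-complex, hence — because that complex has vanishing homology in the relevant degree, which is exactly where the complete-intersection/Koszul-acyclicity hypothesis enters — a boundary, and absorbing that boundary by a further flag-preserving automorphism of $F$ makes the two differentials agree up through $F_{n+1}$. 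Running this to the top (the flag is finite since $H(D)$ has finite projective dimension, the Koszul complex being bounded) yields the isomorphism. Theorem~\ref{thm:intro1} can be brought in here as well: it guarantees the ``first row'' blocks $A_{i,0}$ are the only obstruction to being the plain minimal free resolution, which streamlines the base case and clarifies which higher blocks are forced.

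The main obstacle I anticipate is the inductive step's homological input: one needs that the complex governing the obstructions — morally $\Hom_R(F, H(D))$ or a Koszul-type complex $\Hom_R(K^\bullet(\underline{f}), K^\bullet(\underline{f}))$ with a shifted differential — is acyclic in precisely the degrees where the blocks $A_{i,j}$ live, and this acyclicity is not automatic: it relies delicately on $\underline{f}$ being a regular sequence and on the flag structure forcing the differential to be strictly upper triangular. Getting the bookkeeping right so that each absorbed boundary does not disturb the already-normalized lower-degree blocks (i.e.\ that the automorphisms can be chosen to be ``strictly upper triangular plus identity'') is the delicate part, and is presumably where the hypothesis that $(\underline{g}) \subseteq \m$ — rather than $(\underline{g})$ being a unit ideal or only generated up to units — is used to guarantee minimality and hence that the normalization is canonical. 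I would also keep an eye on characteristic: the statement references a ``$\mathrm{char} \ne 2$'' generalization (Theorem~\ref{thm:nonCHar2}), so the divided-power structure — not merely the exterior algebra — is what must carry the construction in general, and the proof above should be written using divided powers throughout rather than wedge powers.
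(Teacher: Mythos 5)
Your opening ``reconciliation'' step rests on a misreading of the hypothesis, and as stated it is false. ``The image of $d^F$ in $F_0$'' means the image of the \emph{entire} top row $(A_{1,0},A_{2,0},A_{3,0},\dots)$ of the block matrix, not just the anchored block $A_{1,0}=d_1$; in general this ideal strictly contains the ideal $(\underline{f})$ defining $H(D)$, so your claim that $(\underline{f})=(\underline{g})$ up to change of basis fails. For instance, in Example \ref{ex:smallkoszul} over $k[x_1,x_2,x_3]$ with corner entry $f=x_3$, one has $H(D)=S/(x_1,x_2)$ while the image of $d^F$ in $F_0$ is $(x_1,x_2,x_3)$. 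This is not a cosmetic issue: the conclusion of Theorem \ref{thm:intro2} is \emph{not} that $F\cong\fold(K_\bullet(\underline{f}))$, but that $F\cong K(f_1,\dots,f_n)$ as in Proposition \ref{prop:genKoszul}, where the higher $f_i\in\bigwedge^i E^*$ are typically nonzero modulo $(\underline{f})$ (the example above is isomorphic to $K(f_1,f_2)$ with $f_2=x_3e_{12}^*$, and by Theorem \ref{cor:CIequiv} it is not isomorphic to the fold of the Koszul complex). Your sketch never specifies the target differential $d^K$, i.e.\ how the data $f_2,f_3,\dots$ are to be extracted from $F$; in the paper's proof they are precisely the duals of the top-row blocks $A_{i,0}$ under the perfect pairing $\bigwedge^i E\otimes\bigwedge^{n-i}E\to\bigwedge^n E$, and the theorem is that all remaining blocks are forced, after flag-preserving column operations, to equal $(-1)^{ij}f_{i-j}$. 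Without fixing that target, your obstruction-killing induction has no well-defined statement to induct on.

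The second gap is the homological input. You attribute the role of the hypothesis ``complete intersection contained in $\m$'' to minimality and canonicity of the normalization; that is not where it enters. In the proof of Theorem \ref{thm:nonCHar2}, the square-zero relations along the diagonal give an identity of the shape $0=2\sum f_{k-j}f_{i-k}+f_1\bigl(\pm f_{i-j-1}-A_{i,j+1}\bigr)$, and one must conclude that $\pm f_{i-j-1}-A_{i,j+1}$ lies in $\im f_1$ plus the ideal $\mfa$ generated by the images of the even-degree $f_\ell$. This requires the Koszul complex on $f_1$ to remain exact after tensoring with $R/\mfa$, i.e.\ exactly the Tor-independence that the complete-intersection hypothesis on the whole first row guarantees. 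Acyclicity of your proposed obstruction complex cannot be derived from regularity of $\underline{f}$ alone: the unlabeled example in Section \ref{sec:genKoszulCx} built on the Koszul complex of $(x_1^3,x_2^3,x_3^3,x_4^3)$ has $H(F)$ a complete intersection, yet $F$ is not a Koszul differential module, precisely because of cancellation among the products $f_{k-j}f_{i-k}$. As written, your inductive step would therefore ``prove'' a false statement; it only becomes correct once the obstruction is identified as a cycle in $K_\bullet(f_1)\otimes_R R/\mfa$ and the first-row hypothesis is invoked for the exactness of that complex.
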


Finally, in Section \ref{sec:DGAlg} we consider the existence of DG-module structures on free flag resolutions as posited in Question \ref{q3}. This leads us to consider free flag resolutions that can be given the structure of a DG-module over the minimal free resolution of their homology. In the classical case of complexes, it is well-known that every free resolution admits the structure of a (possibly non-associative) DG-algebra structure, and hence the existence of such a structure is guaranteed. Our main result of this section is the following theorem which says that the existence of such a DG-module structure on an arbitrary free flag resolution is in fact a much rarer property. 

\begin{theorem}\label{thm:intro3}
Let $F$ be an anchored free flag with complete intersection homology. If $F$ admits the structure of a DG-module over the minimal free resolution of $H(F)$, then $F$ is isomorphic to the Koszul complex considered as a differential module.
\end{theorem}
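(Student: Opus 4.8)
The plan is to distill the hypothesized DG-module structure into a single morphism of differential modules, and then feed it into the uniqueness machinery for free flag resolutions. Write $H(F)=R/I$ with $I=(f_1,\dots,f_c)$ generated by a regular sequence, so that the minimal free resolution $K$ of $H(F)$ is the Koszul complex on $f_1,\dots,f_c$, with Koszul differential $\partial$; regarded as a differential module, $K$ is precisely the object we want $F$ to be isomorphic to. Suppose $F$ is a DG-module over $K$, with action $\mu\colon K\otimes_R F\to F$, unit $1_F\in F_0=R$, and Leibniz rule $d^F(\xi\cdot x)=\partial(\xi)\cdot x+(-1)^{|\xi|}\xi\cdot d^F(x)$. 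Since $F$ is anchored, $d^F(F_0)\subseteq\bigoplus_{j<0}F_j=0$, so $d^F(1_F)=0$; hence the $R$-linear map $\psi(\xi):=\xi\cdot 1_F$ satisfies $d^F\circ\psi=\psi\circ\partial$, that is, it is a morphism of free flags $\psi\colon K\to F$ (the action is compatible with the gradings, so $\psi$ is degree-preserving).

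Next I would show that $\psi$ is a quasi-isomorphism. The induced map $\psi_*\colon H(K)\to H(F)$ is linear over the homology algebra $H(K)=R/I$, and unitality gives $\psi_*([1_K])=[1_F]$. Here $[1_K]$ generates $H(K)=R/I$; and $[1_F]$ generates $H(F)$ because, in an anchored free flag, every element of $F_0$ is a cycle and the resulting map $F_0\twoheadrightarrow H(F)\cong H(D)$ is the augmentation $R=F_0\twoheadrightarrow R/I$ of the minimal free resolution, which sends $1_F$ to a generator. Thus $\psi_*$ is a surjection from $R/I$ onto $H(F)\cong R/I$, hence an isomorphism (a surjective endomorphism of a Noetherian module is injective), so $\psi$ is a quasi-isomorphism.

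Finally I would invoke the structure theory of free flag resolutions from \cite{brown2021minimal}. Since $K$ is a bounded minimal free complex, it is the minimal free flag resolution of $R/I$; via $\psi$ it is also a free flag resolution of the differential module $F$ itself. Writing $F\cong F'\oplus T$ with $F'$ minimal and $T$ a trivial (contractible) free flag, the composite $K\xrightarrow{\ \psi\ }F\twoheadrightarrow F'$ is a quasi-isomorphism between minimal free flags and hence an isomorphism, so $F\cong K\oplus T$. Comparing ranks, $\operatorname{rank}_R F=\sum_{i=0}^{c}\operatorname{rank}F_i=\sum_{i=0}^{c}\binom{c}{i}=2^{c}=\operatorname{rank}_R K$ because $F$ is anchored, so $T=0$ and $F\cong K$, as claimed.

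The step I expect to be the main obstacle is the second one: confirming that $\psi$ is a genuine quasi-isomorphism rather than merely a nonzero map. This hinges on correctly matching the unit $1_F$ of the DG-module structure with the augmentation baked into the anchored free flag---equivalently, on showing that $[1_F]$ is an $H(K)$-module generator of $H(F)$---and on the (elementary but indispensable) passage from surjectivity to bijectivity for maps of finitely generated modules over a Noetherian ring. A secondary subtlety is the compatibility of the DG-module action with the flag filtration on $F$: the closing appeal to rigidity of minimal free flag resolutions is cleanest when $\psi$ preserves flags, and if one only knows $\psi$ is a morphism of differential modules, the argument must instead be routed through the fact that quasi-isomorphic differential modules have isomorphic minimal free flag resolutions.
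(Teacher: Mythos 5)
Your argument is correct in substance, but it takes a genuinely different route from the paper's proof of Theorem \ref{thm:noDGmod}. The paper works directly with the multiplication: after normalizing via Lemma \ref{lem:MayaLem}, it picks the smallest $i>1$ with nonzero top-row component $f_i$, uses exactness of the Koszul complex to arrange $e_I\cdot_D e_J = e_I\w e_J + (\text{lower flag-degree terms})$, and then applies the Leibniz rule and reduces modulo $\mfa$ to force $f_i(e_I)\in\mfa$, contradicting the top-row criterion of Theorem \ref{cor:CIequiv}. You instead compress the action into the single comparison map $\psi(\xi)=\xi\cdot_D 1$, show $\psi\colon\fold(K_\bullet)\to F$ is a quasi-isomorphism, and conclude via the splitting $F\cong F'\oplus T$ and uniqueness of minimal free resolutions together with a rank count; this avoids the sign-heavy computation and the delicate ``choose the algebra structure'' step, using only unitality of the action, at the cost of leaning on the heavier machinery of \cite{brown2021minimal} (and the homotopy theory of flags from \cite{avramov2007class}), whereas the paper stays within its own Section 3 toolkit. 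Three points you should shore up: (1) Definition \ref{def:DGmod} as literally stated only asks for a morphism $K_\bullet\boxtimes_R F\to F$, so unitality must be taken as part of the definition (as in Example \ref{ex:dgmod}; with the zero action the theorem would be false) --- this is clearly what is intended, and the paper's own proof uses such finer properties as well. (2) The claim that $[1_F]$ generates $H(F)$ is not immediate from ``the augmentation'': you need the short argument that every cycle in an anchored free flag is homologous to one in $F_0$ (strip the top flag component using exactness of the anchoring resolution in positive flag degrees), after which, as you note, the surjective-endomorphism lemma identifies the kernel of $F_0\to H(F)$ with $\mfa$. (3) The minimal summand $F'$ need not be a free flag (compare the modules of Proposition \ref{prop:smallrank}), and $\psi$ need not preserve the flag grading, so the final step should be phrased as: $\fold(K_\bullet)$ and $F'$ are both minimal free resolutions of the differential module $F$, hence isomorphic by the uniqueness results of \cite{brown2021minimal}, and since $F$ is anchored its rank is $2^c=\rank \fold(K_\bullet)$, forcing $T=0$.
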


This theorem gives an example of a property of free resolutions that does \emph{not} generalize to the setting of differential modules. The inability to generalize this property tells us that the DG-algebra structure of free resolutions, at least for complete intersections, relies on structure that is unique to free resolutions rather than structure that can be extended to free flags. This is in contrast with properties that are successfully generalized to free flags in \cite{avramov2007class} and \cite{brown2021minimal}.

\subsection*{Acknowledgements} We thank Daniel Erman and Michael Brown for many helpful conversations and comments throughout all stages of our work.

\section{Background}\label{sec:background}

In this section, we introduce some background and notation on differential modules that will be used throughout the paper. This includes a straightforward reformulation of free flag differential modules without reference to matrices, which will be useful for avoiding complicated matrix computations; this formulation is implicit in the work of Avramov, Buchweitz, and Iyengar \cite{avramov2007class}, but we state it here since it will be used frequently throughout the paper. 


\begin{definition}\label{def:diffMod}
A \emph{differential module} $(D,d)$ or $(D,d^D)$ is an $R$-module $D$ equipped with an $R$-endomorphism $d = d^D \colon D \to D$ that squares to $0$. A differential module is $\bbz^n$-graded of multidegree $\mathbf{a}$ if $D$ is equipped with a $\bbz^n$ grading over $R$ such that $d \colon D \to D(\mathbf{a})$ is a graded map. The category of degree $\mathbf{a}$ differential $R$-modules will be denoted $\dm (a,R)$; when $a = 0$, the more concise notation $\dm (R)$ will be used.

The \emph{homology} of a differential module $(D,d)$ is defined to be $\ker (d) / \im (d)$. If $D$ is $\bbz^n$-graded of degree $\mathbf{a}$, then the homology is defined to be the quotient $\ker(d) / \im(d(- \mathbf{a}))$.

A differential module is \emph{free} if the underlying module $D$ is a free $R$-module. If $(R,\m,k)$ is a local ring or $R = k[x_1, \dots , x_n]$ (equipped with some positive grading) and $D$ is a graded $R$-module, then $D$ is \emph{minimal} if $d \otimes k = 0$.

A morphism of differential modules $\phi \colon (D,d^D) \to (D', d^{D'})$ is a morphism of $R$-modules $D \to D'$ satisfying $d^{D'} \circ \phi = \phi \circ d^D$. Notice that morphisms of differential modules induce well-defined maps on homology in an identical fashion to the case of complexes. A morphism of differential modules is a \emph{quasi-isomorphism} if the induced map on homology is a quasi-isomorphism. 
\end{definition}

\begin{remark}
The collection of differential modules and their morphisms forms a category, denoted $\dm (R)$. Notice that a differential $R$-module $(D,d)$ is equivalently a module over the ring $R[x]/(x^2)$, as mentioned in the introduction. In particular, the category $\dm (R)$ is equivalently the category of $R[x]/(x^2)$-modules, and as such $\dm (R)$ is an abelian category. The category $\dm (a,R)$ is also equivalently graded modules over $R[x]/(x^2)$ in the case that $x$ is given degree $a$.
\end{remark}

The following definition will play an essential role throughout the paper, and it allows us to view the category of complexes as a subcategory of the category of differential modules. 

\begin{definition}\label{def:fold}
Given any complex $F$, there is a functor
\begingroup\allowdisplaybreaks
\begin{align*}
    \fold \colon \textrm{Com} (R) & \to \dm (R), \\
    (F, d^F) &\mapsto \Big( \bigoplus_{i\in \bbz} F_i , \bigoplus_{i \in \bbz} d^F_i \Big), \\
    \{ \phi \colon F_\bullet \to G_\bullet \}   &\mapsto \Big\{ \bigoplus_{i \in \bbz} \phi_i \colon \fold (F_\bullet) \to \fold (G_\bullet) \Big\}.
\end{align*}
\endgroup
The object $\fold (F_\bullet)$ will often be referred to as the \emph{fold} of the complex $F_\bullet$.
\end{definition}

The following definition introduces \emph{free flags}. These are a proper subclass of differential modules that still generalize complexes of free modules, and in general are better behaved than arbitrary differential modules. One way to think of free flags is as differential modules admitting a finite length filtration whose associated graded pieces are themselves free $R$-modules.

\begin{definition}\label{def:flagGrading}
Let $D$ be a differential module. Then $D$ is a \emph{free flag} if $D$ admits a splitting $D = \bigoplus_{i \in \bbz} F_i$, where each $F_i$ is a free $R$-module, $F_i = 0$ for $i \leq 0$, and $d_D (F_i) \subseteq \bigoplus_{j < i} F_j$. 

Given a free flag $D$ with associated splitting $D = \bigoplus_{i \in \bbz} F_i$, define $D^i \colon= \bigoplus_{j \leq i} F_j$. This will be referred to as the \emph{flag grading} on $F$. By definition of a free flag, one has $d_D (D^i) \subset D^{i-1}$, implying that $D$ may be viewed as a complex with homological grading induced by the flag grading.

Associated to a free flag $D$, there are maps $A_{i,j} \colon F_i \to F_j$ induced by splitting the maps $d_D \colon F_i \to D^{i-1}$ with the isomorphism $\hom (F_i , D^{i-1} ) = \bigoplus_{j < i} \hom(F_i , F_j)$. 
\end{definition}

A core theme of \cite{avramov2007class} is that the flag structure on a differential module allows many proofs from classical homological algebra to be generalized to differential modules by substituting the homological grading for the flag grading. As such, it is useful to pass from general differential modules to free flags, which we can do using the following definition.

\begin{definition}
For any other differential module $D$, a \emph{free flag resolution} $F$ of $D$ is a free flag $F$ equipped with a quasi-isomorphism $F \to D$. A \emph{minimal free resolution} of $D$ is a quasi-isomorphism $M\to D$ that factors through a free flag resolution $F$ such that $M\to F$ is a split injection and $M$ is minimal.
\end{definition}

\begin{remark}
Notice that if $F_\bullet \to M$ is a minimal free resolution of a module $M$, then $\fold (F_\bullet) \to M$ (where $M$ is viewed as having the $0$ endomorphism) is a minimal free flag resolution of $M$ (since $H(\fold (F_\bullet)) = \bigoplus_{i \in \bbz} H_i (F_\bullet)$). In general, there may be free flag resolutions of $M$ that do not arise as the fold of a complex, and it is an interesting question as to when a free flag is isomorphic to the fold of some complex. We give a characterization of this property for certain classes of free flag resolutions in Section 3 which turns out to be quite effective in proving some general statements about free flag resolutions.
\end{remark}

One way to think of free flags is as strictly upper triangular block matrices (as in the setup to Theorem \ref{thm:intro1}). This can be useful for explicit computations and more matrix-theoretic methods, but we will also find it useful to think of free flags in a way that is not reliant on matrices. The following observation is a coordinate-free reformulation of the definition of a free flag that highlights the data of the  maps $A_{i,j} \colon F_i \to F_j$ which determine the flag. 

\begin{obs}\label{obs:DMsCoordFree}
A free flag is equivalently the data of a collection of free modules $\{ F_i \mid i \in \bbz \}$ and maps $\{ A_{i,j} \colon F_i \to F_j \mid j<i \}$, such that for all $j<i$, one has the relation
$$\sum_{j < k < i} A_{k,j} A_{i,k} =0.$$
\end{obs}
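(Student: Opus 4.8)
The plan is to observe that both packages of data amount to the same thing once one separates the formal content from the single nonformal ingredient, namely the condition $d_D^2=0$, and to show that this condition unwinds precisely into the stated quadratic relations. One direction of the correspondence is already contained in Definition \ref{def:flagGrading}: given a free flag $(D,d_D)$ with its splitting $D=\bigoplus_i F_i$, the maps $A_{i,j}\colon F_i\to F_j$ arise by composing $d_D|_{F_i}\colon F_i\to D^{i-1}$ with the projection onto $F_j$, using the decomposition $\Hom(F_i,D^{i-1})=\bigoplus_{j<i}\Hom(F_i,F_j)$. It is worth noting here that this decomposition is an equality, not merely an inclusion, because the flag hypothesis $F_j=0$ for $j\le 0$ forces $D^{i-1}=\bigoplus_{0<j<i}F_j$ to be a \emph{finite} direct sum, so that $\Hom(F_i,-)$ commutes with it. Conversely, given free modules $\{F_i\}$ and maps $\{A_{i,j}\mid j<i\}$, I would set $D=\bigoplus_i F_i$ and define $d_D$ on the summand $F_i$ by $d_D|_{F_i}=\sum_{j<i}A_{i,j}$; by construction $d_D(F_i)\subseteq\bigoplus_{j<i}F_j$. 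These two assignments (projection versus summation over a direct sum) are tautologically mutually inverse, so the only substantive point, in either direction, is that $d_D^2=0$ is equivalent to the relations.

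For this I would compute $d_D^2$ on a summand. For $v\in F_i$ we have $d_D(v)=\sum_{j<i}A_{i,j}(v)$ with $A_{i,j}(v)\in F_j$, and applying $d_D$ once more gives
\[
d_D^2(v)=\sum_{j<i}d_D\big(A_{i,j}(v)\big)=\sum_{j<i}\ \sum_{k<j}A_{j,k}\big(A_{i,j}(v)\big).
\]
Reindexing by the target summand, the component of $d_D^2(v)$ lying in $F_k$ is $\big(\sum_{k<j<i}A_{j,k}A_{i,j}\big)(v)$. Since $D$ is the direct sum of the $F_k$, the vanishing of $d_D^2$ on $F_i$ is equivalent to the vanishing of each such component, that is, to $\sum_{k<j<i}A_{j,k}A_{i,j}=0$ for every $k<i$. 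Swapping the names of the summation index $j$ and the outer index $k$ throughout recovers exactly the stated relation $\sum_{j<k<i}A_{k,j}A_{i,k}=0$ for all $j<i$. Read forward, this computation shows a free flag yields maps satisfying the relations; read backward, it shows the relations force $d_D^2=0$, so that the constructed $(D,d_D)$ is a genuine free flag.

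There is no serious obstacle here; the result is a bookkeeping translation of the square-zero law. The only points requiring care are the finiteness remark guaranteeing the $\Hom$-decomposition is an equality, and the interchange-of-summation step together with the relabeling, where it is easy to transpose the roles of $A_{k,j}$ and $A_{i,k}$ or to misstate the range $j<k<i$. As a sanity check I would verify the smallest nontrivial case $i-j=2$, where the sum collapses to the single term $A_{j+1,j}A_{i,j+1}$, so the relation reads $A_{j+1,j}A_{j+2,j+1}=0$; this is precisely the condition that the composite $F_{j+2}\to F_{j+1}\to F_j$ vanishes, recovering the familiar $d^2=0$ condition for consecutive maps and confirming the orientation of the composite agrees with the stated relation.
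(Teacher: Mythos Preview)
Your proof is correct. The paper states this as an observation without proof, so there is nothing to compare against; your write-up is exactly the natural verification, and the care you take with the finiteness of $D^{i-1}$ (ensuring the $\Hom$-splitting is an equality) and with the reindexing is appropriate.
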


\begin{remark}
In order to distinguish the structure maps $A_{i,j}$ given in Observation \ref{obs:DMsCoordFree}, we will often use the more precise notation $A_{i,j}^D$ to specify that these maps determine the differential module $D$.
\end{remark}

The theory of minimal free resolutions of arbitrary differential modules turns out to be quite subtle. However, the following result of Brown and Erman shows that the classical theory of minimal free resolutions of modules still plays an important role in understanding the homological properties of differential modules.

\begin{theorem}[{\cite[Theorem 3.2]{brown2021minimal}}]\label{thm:danMikeThm}
Let $D$ be a differential module and $(F_\bullet , d) \to H(D)$ a minimal free resolution of $H(D)$. Then $D$ admits a free flag resolution where the underlying free module is $F_\bullet$ and where, in the notation of Observation \ref{obs:DMsCoordFree}, $A_{i,i-1} = d_i$ for all $i$.
\end{theorem}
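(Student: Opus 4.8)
The plan is to construct the flag structure maps $A_{i,j}\colon F_i\to F_j$ (for $j<i$) and the components $\epsilon_i\colon F_i\to D$ of a quasi-isomorphism $\epsilon=\bigoplus_i\epsilon_i\colon N:=\bigl(\bigoplus_{i\ge 0}F_i,\,d^N\bigr)\to D$ simultaneously, by induction on the flag degree $i$, and then to verify that $\epsilon$ is a quasi-isomorphism. Here $d^N|_{F_i}=\sum_{j<i}A_{i,j}$; the conditions making $N$ a free flag are those of Observation \ref{obs:DMsCoordFree}, namely $\sum_{j<k<i}A_{k,j}A_{i,k}=0$ for all $j<i$, and we are required to take $A_{i,i-1}=d_i$. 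Write $Z=\ker d^D$ and $B=\im d^D$, so $H(D)=Z/B$, and let $\tau\colon F_0\to H(D)$ be the augmentation of the resolution. To start, choose any lift $\epsilon_0\colon F_0\to Z\subseteq D$ of $\tau$ (possible since $F_0$ is free and $Z\onto H(D)$); then $\epsilon_0$ carries $\im d_1=\ker\tau$ into $B$, so $\epsilon_0 d_1$ lifts along $d^D\colon D\onto B$ to a map $\epsilon_1$ with $d^D\epsilon_1=\epsilon_0 A_{1,0}$.

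For the inductive step at degree $i\ge 2$, I would first determine $A_{i,i-2},A_{i,i-3},\dots,A_{i,1}$ in that order: the square-zero condition at index $\ell$ can be rewritten $d_{\ell+1}A_{i,\ell+1}=-\sum_{\ell+1<k<i}A_{k,\ell}A_{i,k}$, whose right-hand side involves only maps already in hand; the conditions at indices $i-1$ and $i-2$ hold automatically, and for each $\ell$ from $i-3$ down to $0$ one solves for $A_{i,\ell+1}$ by lifting the right-hand side along $d_{\ell+1}$ (using freeness of $F_i$), once one knows it lies in $\im d_{\ell+1}$ --- equivalently, in $\ker d_\ell$ when $\ell\ge 1$, and in $\ker\tau$ when $\ell=0$. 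The map $A_{i,0}$ is unconstrained by any square-zero condition at index $i$; writing $T_i:=\sum_{0<j<i}\epsilon_j A_{i,j}\colon F_i\to D$, I would choose $A_{i,0}$ so that $\tau A_{i,0}$ equals $-\overline{T_i}$, where $\overline{T_i}\colon F_i\to Z/B=H(D)$ is the reduction of $T_i$ (possible since $F_i$ is free and $\tau$ is onto); then $\epsilon_0 A_{i,0}+T_i$ has image in $B$, so it lifts along $d^D$ to a map $\epsilon_i$ with $d^D\epsilon_i=\sum_{j<i}\epsilon_j A_{i,j}$, finishing the step.

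The technical core, and where I expect the real work to be, is checking that the maps one wishes to lift actually land where they should. To solve for $A_{i,\ell+1}$ one needs the right-hand side above in $\ker d_\ell$ for $\ell\ge 1$ (this is the only use of acyclicity of the resolution): compose it with $d_\ell$ and collapse using the square-zero conditions of index $i$ already established at earlier sub-steps, together with the square-zero conditions of the lower stages. The case $\ell=0$ needs the right-hand side in $\ker\tau$, obtained by reducing modulo $B$ and substituting the chain-map identities of the lower stages and the fact that $\epsilon_0$ reduces to $\tau$. Likewise, $T_i$ lands in $Z$ --- so $\overline{T_i}$ is defined --- by applying $d^D$, expanding via the chain-map identities, and collapsing via all the square-zero conditions at index $i$; and once $A_{i,0}$ is chosen, $\overline{\epsilon_0 A_{i,0}+T_i}=\tau A_{i,0}+\overline{T_i}=0$ forces $\epsilon_0 A_{i,0}+T_i$ into $B$. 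This ``reduce modulo $B$, feed back the relations already proved'' bookkeeping is routine but must be organized with some care. (Minimality of the resolution is not needed for the construction; it is part of the hypothesis with a view to later applications and here only gives control on $A_{i,i-1}=d_i$.)

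Finally I would check that $\epsilon$ is a quasi-isomorphism. For surjectivity on homology, given $w\in Z$ pick $v_0\in F_0$ with $\tau(v_0)=[w]$; since $d^N$ vanishes on $F_0$ we get $v_0\in\ker d^N$ and $[\epsilon_0(v_0)]=\tau(v_0)=[w]$. For injectivity, suppose $v\in\ker d^N$ with $\epsilon(v)\in B$; if $v\ne 0$, write $v=v_0+\dots+v_p$ with $v_p\ne 0$ its top flag-degree term. Reading off the $F_{p-1}$-component of $d^N(v)=0$ gives $d_p(v_p)=0$, so for $p\ge 1$ acyclicity yields $v_p=d_{p+1}(u)$ with $u\in F_{p+1}$; replacing $v$ by $v-d^N(u)$ --- same homology class, still mapped into $B$ by $\epsilon$, and of strictly smaller top flag degree --- lets us induct down to $v=v_0\in F_0$. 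There $[\epsilon_0(v_0)]=0$ forces $\tau(v_0)=0$, hence $v_0\in\ker\tau=\im d_1\subseteq\im d^N$, so $[v]=0$ in $H(N)$. This produces the required free flag resolution $N\to D$ with $A_{i,i-1}=d_i$.
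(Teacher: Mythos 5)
Your proposal is correct: the inductive bookkeeping goes through exactly as you describe (the obstruction $-\sum_{\ell+1<k<i}A_{k,\ell}A_{i,k}$ is killed by $d_\ell$, resp.\ by $\tau$, using the already-established index-$i$ relations together with the lower-stage relations and chain-map identities, and the final quasi-isomorphism check by descending top flag degree is sound). Note, however, that the paper itself gives no proof of this statement --- it is quoted from Brown--Erman \cite[Theorem 3.2]{brown2021minimal} --- so there is no in-paper argument to compare against; your construction is essentially the same obstruction-theoretic induction as in the cited source, building the maps $A_{i,j}$ and the comparison map $\epsilon$ simultaneously and using exactness of $F_\bullet$ (and surjectivity of $F_0\to H(D)$) to solve each lifting problem, with minimality indeed playing no role in the construction itself.
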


\section{Anchored Free Flags and Folds of Complexes}\label{sec:freeFlagsAndFolds}

In this section, we prove Theorem \ref{cor:CIequiv}, which is our first structural result about free flag resolutions whose homology is a complete intersection.  Specifically, we consider a free flag resolution $F$ as in Theorem \ref{thm:danMikeThm} whose homology is a complete intersection, and we show that question of whether or not $F$ is trivial—i.e. where $F$ is isomorphic to the fold of a Koszul complex—is entirely determined by an analysis of the “top row” of the differential. As an application, we then completely classify all differential modules $D$ where $H(D)$ is isomorphic to the residue field $k$.


We conclude the section with some interesting examples illustrating the subtlety of minimal free resolutions of differential modules. In particular, we show that if $R = k[x_1 , \dots , x_n]$ is a standard graded polynomial ring over a field, then $k$ viewed as a differential $R$-module in degree $2$ has total Betti numbers \emph{strictly} less than the total Betti numbers of $k$ when viewed as an $R$-module in the usual fashion.

\begin{definition}\label{setup:DGsetup}
Let $D$ be a free flag and $(F_\bullet , d) \to H(D)$ a minimal free resolution of $H(D)$. If $D$ arises as in the statement of Theorem \ref{thm:danMikeThm}, then $D$ will be called an \emph{anchored free flag resolution}.
\end{definition}

\begin{remark}
An anchored free flag resolution has differential with off-diagonal blocks coming from the minimal free resolution of the homology. These off-diagonal blocks can be thought of as ``anchors" for the maps $A_{i,j}$ for $i- j \geq 2$; more precisely, we have complete freedom to choose the `higher-up" maps $A_{i,j}$ up to the constraint that these maps must still make the corresponding differential square to $0$. 
\end{remark}

Conceptually, the following lemma shows that if one can perform column operations on the matrix representation of the differential of an anchored free flag to cancel a term $A_{i,0}$, then one can in fact perform column operations to cancel all other terms appearing along the associated diagonal.


\begin{lemma}\label{lem:MayaLem}
Let $D$ be an anchored free flag and assume $\im A_{i,0}^D \subset \im d_1$ for all $2 \leq i \leq m$ for some given $m$. Then $D$ is isomorphic to a differential module $D'$ satisfying $A_{i, \ell}^{D'} = 0$ for all $2 \leq i - \ell \leq m$. 
\end{lemma}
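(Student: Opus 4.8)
The idea is to induct on $m$, the key point being that the anchoring data $A_{i,i-1}^D = d_i$ (the differentials of the minimal free resolution of $H(D)$) together with the complete intersection hypothesis on $H(D)$ gives us enough control to propagate cancellation down a diagonal. I would start with the base case $m = 2$: the assumption is $\im A_{2,0}^D \subset \im d_1$, and we want to find an isomorphism $D \cong D'$ with $A_{2,0}^{D'} = 0$. Since $\im A_{2,0}^D \subset \im d_1$ and $F_1 \xrightarrow{d_1} F_0$ with $F_2 \xrightarrow{d_2} F_1$ exact at $F_1$ in the minimal resolution, lift $A_{2,0}^D$ through $d_1$ to a map $s \colon F_2 \to F_1$, i.e. $d_1 s = A_{2,0}^D$. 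Then conjugate $d^D$ by the elementary automorphism of $F = \bigoplus F_i$ that is the identity on each $F_i$ and sends (in block form) the $F_2$-summand into $F_1$ via $-s$ (equivalently, add $-s$ times column block $0$ to column block... — concretely the automorphism $\Phi = \mathrm{id} + \epsilon$ where $\epsilon$ has single nonzero block $-s\colon F_2 \to F_1$). A direct computation of $\Phi^{-1} d^D \Phi$ shows the $(2,0)$-block becomes $A_{2,0}^D - d_1 s = 0$, while the only other block that changes is $(2,1)$, which stays in the correct form since $A_{2,1}$ just gets modified by a term involving $s$ and lower blocks — and since we only need $A^{D'}_{i,i-1}$ to remain equal to $d_i$ up to isomorphism for the statement (or, if we want to keep it anchored on the nose, one checks the $(2,1)$-block is unchanged because there are no blocks strictly between). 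This is really the same "cancel a unit/cancel a liftable block by a row-column operation" move that underlies Observation~\ref{obs:DMsCoordFree}, done carefully with signs.

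**The inductive step.** Assume the result for $m-1$: so after an isomorphism we may assume $A_{i,\ell}^D = 0$ for all $2 \le i - \ell \le m-1$, and we still have $\im A_{m,0}^D \subset \im d_1$ (this hypothesis is about the original $D$, so I would need to check it is preserved — or better, re-derive it: after the $m-1$ step the only surviving off-diagonal blocks in rows $\le m-1$ are the $d_i$'s and the $(i,0)$ blocks are zero, so the square-zero relations $\sum_{0 < k < m} A_{k,\ell}^D A_{m,k}^D = 0$ collapse dramatically). Now with all intermediate diagonals dead, the square-zero condition $\sum_{j < k < i} A^D_{k,j} A^D_{i,k} = 0$ from Observation~\ref{obs:DMsCoordFree}, applied with $(i,j) = (m, m-2), (m,m-3), \dots$, forces the blocks $A_{m,\ell}^D$ for $1 \le \ell \le m-1$ to assemble into a chain map $F_m \to F_{\bullet}[\text{shift}]$ lifting... more precisely, one extracts that $(A_{m,m-1}^D = d_m, A_{m,m-2}^D, \dots, A_{m,0}^D)$ forms a morphism of complexes, or at least that $d_1 A_{m,1}^D$ is related to $A_{m,0}^D d_m$-type compositions. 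The hypothesis $\im A_{m,0}^D \subseteq \im d_1$ lets us lift $A_{m,0}^D$ through $d_1$ to $s\colon F_m \to F_1$, and then — using exactness of the minimal resolution of $H(D)$ at each spot (here is where completeness and minimality of $F_\bullet$ enter) — inductively lift to a full homotopy $s_\ell \colon F_m \to F_{\ell+1}$ witnessing that the chain map $(A_{m,\ell}^D)$ is null-homotopic down to level $0$; conjugating $d^D$ by the automorphism $\mathrm{id} - \sum_\ell s_\ell$ (block automorphism sending $F_m \to \bigoplus_{1 \le \ell} F_\ell$) kills all of $A_{m,\ell}^D$ for $1 \le \ell \le m-1$ simultaneously and leaves lower rows untouched. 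This yields $D'$ with $A^{D'}_{i,\ell} = 0$ for $2 \le i - \ell \le m$, completing the induction.

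**Where the complete intersection hypothesis is used.** It is worth isolating this, because a priori the lifting argument above only used exactness of $F_\bullet$, which is automatic. The subtlety is that for a general homology $H(D)$, knowing $\im A_{m,0}^D \subseteq \im d_1$ need not imply that the whole tuple $(A_{m,\ell}^D)_{\ell}$ is a chain map that lifts a map $F_m \to H(D)$ which is null — the square-zero relations are weaker than being a chain map once the intermediate diagonals are present, and even after the inductive cleanup one must argue that the obstruction to lifting the homotopy past each stage vanishes. When $H(D) = R/I$ with $I$ a complete intersection, the minimal free resolution is the Koszul complex, which has the extra multiplicative/self-duality structure that makes these obstruction computations collapse: concretely, I expect to use that in the Koszul complex on a regular sequence, any chain endomorphism is homotopic to multiplication by an element of $R$, and a null map $F_m \to R/I$ forces that element into $I$, hence the relevant compositions land in $\im d_1$.

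**The main obstacle.** I expect the real work — and the place a naive argument breaks — is verifying that conjugating by the "big" block automorphism $\mathrm{id} - \sum_\ell s_\ell$ in the inductive step genuinely kills \emph{all} the $A_{m,\ell}^D$ at once without reintroducing nonzero blocks on the already-cleared diagonals $2 \le i - \ell \le m-1$ or breaking the anchoring $A_{i,i-1} = d_i$. This requires choosing the $s_\ell$ compatibly (as a genuine contracting homotopy, not independent lifts) so that the cross-terms in $\Phi^{-1} d^D \Phi$ telescope, and it is exactly here that one needs the relations from Observation~\ref{obs:DMsCoordFree} for $D$ in their full strength plus the complete intersection structure to guarantee the homotopy extends all the way down. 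A cleaner route may be to phrase the whole thing not via explicit matrices but via the coordinate-free description: reinterpret "all intermediate diagonals vanish" as saying $D$ is (up to the top block) a mapping cone, reduce to a statement about lifting a morphism in the homotopy category, and invoke that $\Hom$-level obstructions vanish for Koszul complexes. I would attempt the matrix version first for concreteness and fall back to the conceptual version if the bookkeeping becomes unwieldy.
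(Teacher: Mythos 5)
There is a genuine gap, in two parts. First, you build the argument around a complete intersection hypothesis on $H(D)$ that the lemma does not have: the statement is for an arbitrary anchored free flag, and it is later applied in exactly that generality (for instance in Corollary \ref{cor:imageContainCor}, where no complete intersection is assumed). Your paragraph on where the complete intersection enters asserts that for general homology the lifting obstructions need not vanish and leans on special features of the Koszul complex to kill them; since that hypothesis is not available, a proof that genuinely needs it proves a weaker statement than the one asked. In fact no such input is needed: the only homological fact required is that $F_\bullet$ is the minimal free resolution of $H(D)$, hence exact in positive degrees, so $\ker d_\ell = \im d_{\ell+1}$ for all $\ell \geq 1$ --- and this holds for every anchored free flag.

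Second, your inductive step does not cover what must be proved. After clearing the diagonals $2,\dots,m-1$, what remains is the entire $m$-th diagonal, i.e.\ the blocks $A_{i,i-m}$ for every $i \geq m$, but you only discuss row $m$. There, by your own inductive hypothesis, $A_{m,\ell}=0$ for $1 \leq \ell \leq m-2$ already, and $A_{m,m-1}=d_m$ must be preserved, so the proposed null-homotopy ``killing all $A_{m,\ell}$ for $1\leq \ell\leq m-1$'' is at best vacuous and at worst destroys the anchoring; meanwhile $A_{m+1,1}, A_{m+2,2},\dots$ are never treated, and that is where the real work lies. It is elementary and needs no obstruction theory: cancel the column-zero blocks $A_{i,0}$ ($2\leq i\leq m$) by lifting through $d_1$ and conjugating (your base-case move; note the blocks that change are the column-one blocks in rows $\geq 3$, not the $(2,1)$ block, and the anchoring survives). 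Then the relation $\sum_{j<i}A_{j,0}A_{i,j}=0$ of Observation \ref{obs:DMsCoordFree} collapses, for $i\leq m+1$, to $d_1A_{i,1}=0$, so exactness gives $\im A_{i,1}\subseteq \im d_2$; cancel those by column operations into $F_2$ (this cannot reintroduce column-zero terms because $A_{2,0}=0$ now), and iterate column by column. This cascade --- square-zero relation plus exactness, repeated --- is the paper's entire proof; your draft has the correct first move but replaces the cascade with an unavailable complete intersection crutch and leaves most of the $m$-th diagonal untouched.
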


\begin{proof}
The assumption $\im A_{i,0}^D \subset \im d_1$ for all $2 \leq i \leq m$ implies that $D$ is isomorphic to a differential module $D'$ with the same underlying module determined by maps $\{ A_{i,j}^{D'} \colon F_i \to F_j \mid i<j \}$, but satisfying $A_{i,0}^{D'} = 0$ for all $i \leq m$ and $A_{i,i-1}^{D'} = d_i$ for all $i$. 

Let $2 \leq i-1 \leq m$. Then there is the relation
$$\sum_{j<i} A_{j,0}^{D'} \cdot A_{i,j}^{D'} = 0.$$
Since $i \leq m+1$ and $j <i$, one has that $A_{j,0}^{D'} = 0$ for each $j > 1$ appearing in the above equality. Thus $d_1 (A_{i,1}^{D'}) = 0$, and exactness implies that $ \im A_{i,1} \in \im d_2$ for each $i \leq m+1$. Replacing $D$ with $D'$ and iterating this argument, the result follows.
\end{proof}

In particular, the above gives a criterion for $D$ to be isomorphic to the fold of the minimal free resolution of its homology. One might hope that this is in fact an equivalence---that is, that any differential module isomorphic to the fold of the minimal free resolution of its homology can be identified in this way. This is in general not the case, as we see in the following example.

\begin{example}\label{ex:cantCancel}
Let $R = k[x_1,x_2]$ and $E$ be a rank $2$ free module on the basis $e_1 , e_2$. Let $D = \bigwedge^\bullet E$ be the free flag with differential
$$\begin{pmatrix} 0 & x_1^2 & x_1x_2 & x_1 \\
0 & 0 & 0 & -x_2 \\
0 & 0 & 0 & x_1 \\
0 & 0 & 0 & 0  \\
\end{pmatrix}.$$
Let $K_\bullet$ denote the minimal free resolution of $H(D)$
\[
K_\bullet = \quad \bigwedge^0 E\xleftarrow{\begin{pmatrix}x_1^2 & x_1x_2\end{pmatrix}} \bigwedge^1 E\xleftarrow{\begin{pmatrix}
-x_2\\x_1\end{pmatrix}} \bigwedge^2 E
\]
 Then the morphism of differential modules $D\to \fold(D_\bullet)$ induced by
\begingroup\allowdisplaybreaks
\begin{align*}
    1 & \mapsto 1, &
    e_1 \mapsto e_1, & \quad e_2 \mapsto e_2 , &
    e_1 \w e_2 & \mapsto e_1 \w e_2 + e_1,
\end{align*}
\endgroup
is an isomorphism, even though $x_1 \notin (x_1^2 , x_1 x_2) = \im d_1^K$. 
\end{example}

Note that the isomorphism described in the above example corresponds to performing \emph{row} operations on the differential to cancel out the $x_1$ in the corner. In this case, we are able to cancel via row operations but not by column operations. This is explained by the lack of symmetry in the differentials of the minimal free resolution of $H(D)$. In fact, when the minimal free resolution of $H(D)$ is given by the Koszul complex---i.e. when $H(D)$ is a complete intersection---this scenario cannot arise, as we will show next.

First, notice that any morphism of free flags $\phi \colon (D,d) \to (D , d')$ of free flags with the same underlying free module $\bigoplus_{i \in \bbz} F_i$ splits as a direct sum of maps $\phi_{i,j} \colon F_i \to F_j$ for every $j \leq i$. We will employ this notation in the following statement:

\begin{lemma}\label{lem:theConverse}
Let $D$ be an anchored free flag. Assume that there exists an isomorphism $\phi \colon D \to \fold (F_\bullet)$ for some complex $F_\bullet$, and assume that $\phi_{i-1,0} (d_i^F (F_i)) \subset \im d_1^F$ for all $i \geq 2$. Then $\im A_{i,0} \subset \im d_1^F$ for all $i \geq 2$.
\end{lemma}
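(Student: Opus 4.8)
The plan is to recast everything in the block‑matrix language of Observation~\ref{obs:DMsCoordFree}. Since $D$ is anchored, $A^D_{i,i-1}$ is the $i$‑th differential $d_i$ of the minimal free resolution of $H(D)$, and the free‑flag isomorphism $\phi$ identifies the flag complex of $D$ with $F_\bullet$, so we may take $d_i^F=d_i$. Writing $\phi=(\phi_{i,j})_{j\leq i}$, the chain‑map identity $\phi\circ d^D=d^{\fold(F_\bullet)}\circ\phi$ unwinds blockwise to
\[
\sum_{j\leq k\leq i-1}\phi_{k,j}\,A^D_{i,k}\;=\;d_{j+1}\,\phi_{i,j+1}\qquad(0\leq j<i).
\]
The $(i,j)=(1,0)$ instance reads $\phi_{0,0}d_1=d_1\phi_{1,1}$; since $\phi_{0,0},\phi_{1,1}$ are isomorphisms (they are the associated‑graded pieces of $\phi$), this gives $\phi_{0,0}(\im d_1)=\im d_1$, so it suffices to prove $\im(\phi_{0,0}A^D_{i,0})\subseteq\im d_1$ for $i\geq2$. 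Moreover, as $F_\bullet$ is a resolution, $d_{k+1}(F_{k+1})=\ker d_k$ for $k\geq1$, so the hypothesis ``$\phi_{i-1,0}(d_i^F(F_i))\subseteq\im d_1^F$'' is precisely the assertion that $\phi_{k,0}(\ker d_k)\subseteq\im d_1$ for every $k\geq1$.

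I would then induct on $i$. Peeling the $k=0$ and $k=i-1$ terms off the $(i,0)$ identity and using $A^D_{i,i-1}=d_i$ gives
\[
\phi_{0,0}A^D_{i,0}\;=\;d_1\phi_{i,1}\;-\;\phi_{i-1,0}d_i\;-\;\sum_{k=1}^{i-2}\phi_{k,0}A^D_{i,k},
\]
whose first two terms have image in $\im d_1$ — the first trivially, the second by the reformulated hypothesis. (For $i=2$ the sum is empty, giving the base case.) The remaining work is to control the ``middle'' terms $\phi_{k,0}A^D_{i,k}$ for $1\leq k\leq i-2$. Assuming inductively that $\im A^D_{\ell,0}\subseteq\im d_1$ for $2\leq\ell\leq i-1$, Lemma~\ref{lem:MayaLem} replaces $D$ by an isomorphic anchored free flag $D'$ with $A^{D'}_{p,q}=0$ whenever $2\leq p-q\leq i-1$; in particular all middle blocks $A^{D'}_{i,k}$ vanish, and because the column operations realizing $D\cong D'$ only involve maps into $F_{\geq1}$ they do not touch the $(0,i)$‑block, so $A^{D'}_{i,0}=A^D_{i,0}$. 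For the transported isomorphism $\phi'=\phi\circ\psi^{-1}\colon D'\to\fold(F_\bullet)$ (with $\psi\colon D\to D'$ the normalizing isomorphism) the identity above collapses to $\phi'_{0,0}A^{D'}_{i,0}=d_1\phi'_{i,1}-\phi'_{i-1,0}d_i$; if $\phi'$ still satisfies the hypothesis at index $i$, then the right side has image in $\im d_1$, and the first observation yields $\im A^D_{i,0}=\im A^{D'}_{i,0}\subseteq\im d_1$.

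The step I expect to be the main obstacle is this last proviso: verifying that $\phi'$ still satisfies the hypothesis, i.e.\ $\phi'_{i-1,0}(\ker d_{i-1})\subseteq\im d_1$. Since $\psi$ (hence $\psi^{-1}$) has identity associated graded and only involves maps into $F_{\geq1}$, one has $\phi'_{i-1,0}=\phi_{i-1,0}+\sum_{k=1}^{i-2}\phi_{k,0}(\psi^{-1})_{i-1,k}$, and — using $\phi_{k,0}(\ker d_k)\subseteq\im d_1$ — this reduces to showing that the off‑diagonal blocks of $\psi^{-1}$ carry $\ker d_{i-1}$ into $\ker d_k$. That compatibility has to be extracted from the explicit column‑operation construction behind Lemma~\ref{lem:MayaLem} (the cancellation maps there are built from ``partial nullhomotopies'' for $F_\bullet$, which is what makes them respect cycles), and getting the bookkeeping right — tracking which cycles survive at which stage of the normalization — is the technical heart of the argument. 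That the hypothesis is genuinely needed at exactly this point is shown by Example~\ref{ex:cantCancel}, where the analogous $\phi_{1,0}$ carries a cycle of the resolution outside $\im d_1$. An alternative that avoids the normalization would be to bound each middle term directly from the $(i,k)$ chain‑map identities and the square‑zero relations $\sum_{j<k<i}A^D_{k,j}A^D_{i,k}=0$, but this appears to force carrying a stronger inductive hypothesis over all the compositions $\phi_{j,0}A^D_{\ell,j}$ simultaneously.
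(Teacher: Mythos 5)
Your skeleton is the same as the paper's: extract the $F_0$-component of the chain-map identity $\phi\circ d^D=d^{\fold(F_\bullet)}\circ\phi$ after normalizing $\phi_{0,0}$, obtaining $d_1^F(\phi_{i,1}(f_i))=A_{i,0}(f_i)+\sum_{0<j<i}\phi_{j,0}(A_{i,j}(f_i))$, then induct on $i$, use the hypothesis on the $j=i-1$ term, and invoke Lemma~\ref{lem:MayaLem} to kill the middle terms before the next step. Your parenthetical observation that the normalization of Lemma~\ref{lem:MayaLem} does not disturb the blocks $A_{\ell,0}$ beyond the range being cancelled is correct (the column operations map into $F_{\geq 1}$, and the $(\ell,0)$-entries they could create are composites through blocks $A_{r,0}$ that have already been set to zero), and it is a point the paper leaves implicit.

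The step you flag as the main obstacle is indeed the crux, and you should know that the paper's proof does not resolve it either: after ``replacing $D$'' it simply re-invokes the hypothesis $\phi_{i-1,0}(d_i^F(F_i))\subseteq\im d_1^F$ for what is now the transported isomorphism $\phi'=\phi\circ\psi$, without checking that $\phi'$ inherits it. Your proposed fix does not come for free: at the first nontrivial stage one has $\phi'_{2,0}=\phi_{2,0}+\phi_{1,0}B_2$ with $d_1B_2=\pm A_{2,0}$, and the square-zero relation $d_1A_{3,1}+A_{2,0}d_3=0$ shows $B_2d_3-(\pm A_{3,1})$ lands in $\ker d_1=\im d_2$; hence, modulo the given hypothesis, the transfer condition is equivalent to $\phi_{1,0}(A_{3,1}(F_3))\subseteq\im d_1$ --- precisely the ``middle term'' you were trying to avoid, and $A_{3,1}(F_3)$ consists of non-cycles in general, so the off-diagonal blocks of $\psi$ do \emph{not} simply carry cycles to cycles. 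So the normalization only repackages the middle terms into the transfer condition, and neither your sketch nor the paper closes this in the lemma's stated generality. What saves the day where the lemma is actually used (Theorem~\ref{cor:CIequiv}) is that for the Koszul complex $d_i^F(F_i)\subseteq(\im d_1^F)\,F_{i-1}$, so the hypothesis holds for \emph{every} $R$-linear map in the $(i-1,0)$-spot; it is therefore automatically inherited by the transported isomorphism, and both your induction and the paper's close there. If you want the general statement on record, you should either add this $\phi$-independence as a hypothesis or supply the missing transfer argument; as written, your proof (like the paper's) is only complete in the $\phi$-independent situation, which Example~\ref{ex:cantCancel} shows is not vacuous to worry about.
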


\begin{proof}
View the free flags $D$ and $\fold (F_\bullet)$ as complexes with $D^i$ in homological degree $i$, and the differentials induced by $d^D$ and $d^F$, respectively. Then the assumption $D \cong \fold (F_\bullet)$ (as differential modules) is equivalent to assuming that there is an isomorphism of \emph{complexes} $D \cong \fold (F_\bullet)$ (where the flag grading induces the homological degree, as in Definition \ref{def:flagGrading}). In each homological degree there is a splitting $D^i = D^{i-1} \oplus F_i$ and $(d_D)_i = (d_D)_{i-1} \oplus (d_D)|_{F_i}$, so it suffices to compute the map $\phi \colon F_i \to D^{i-1}$. Observe that $\phi$ splits as a sum of morphisms:
$$\phi = \sum_{j\leq i} \phi_{i,j},$$
where $\phi_{i,j} \colon F_i \to F_j$. Moreover, changing bases of $F_0$ as necessary, $\phi|_{F_0}$ may be chosen to be the identity. Let $f_i \in F_i$ viewed as an element in homological degree $i$; the assumption that $\phi$ is a morphism of complexes then yields:
\begingroup\allowdisplaybreaks
\begin{align*}
    \phi (d^D (f_i) ) &= \phi \Big( \sum_{0<j<i} A_{i,j} (f_i) \Big) + A_{i,0} (f_i) \\
    &= \sum_{j \leq i} d_j^F (\phi_{i,j} (f_i)) \\
    &= d^F (\phi( f_i)) . 
\end{align*}
\endgroup
Comparing the above restricted to the direct summand $F_0$, one obtains the equality
\begin{equation}\label{eqn:deg0equality} d_1^F ( \phi_{i,1} (f_i) ) = A_{i,0} (f_i) + \sum_{0 < j < i} \phi_{j,0} (A_{i,j} (f_i)).\end{equation}
Now, we proceed by induction on $i$ to prove the desired statement. When $i=2$, the above equality becomes
$$d_1^F (\phi_{2,1} (f_2)) = A_{2,0} (f_2) + \phi_{1,0} ( d_2^F (f_2)).$$
The assumption that $\phi_{1,0} ( d_2^F (f_2)) \in \im d_1^F$ implies that $A_{2,0} (F_2) \subset \im d_1^F$, and Lemma \ref{lem:MayaLem} implies that $D$ may be replaced with a differential module satisfying $A_{i,i-2} = 0$ for all $i \geq 2$. Proceeding inductively, assume $i >2$; by induction, we may assume that $A_{j,k} = 0$ for all $1<j-k <i$. The equality \ref{eqn:deg0equality} then reduces to 
$$d_1^F (\phi_{i,1} (f_i) ) = A_{i,0} (f_i) + \phi_{i-1,0} (d_i^F (f_i)),$$
and again the assumption $\phi_{i-1,0} ( d_i^F (f_i)) \in \im d_1^F$ implies that $A_{i,0} (f_i) \in \im d_1^F$, and Lemma \ref{lem:MayaLem} allows us to replace $D$ with a differential module satisfying $A_{j,k} = 0$ for all $j-k \leq i$. Iterating this argument, the result follows.
\end{proof}

The above holds, in particular, when $H(D)$ is a complete intersection.

\begin{theorem}\label{cor:CIequiv}
Let $D$ be an anchored free flag and assume that $H(D)$ is a complete intersection. Then
$$D \cong \fold (F_\bullet) \quad \iff \quad \im A^D_{i,0} \subset \im d_1 \ \textrm{for all} \ i \geq 2.$$
\end{theorem}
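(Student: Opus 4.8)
The plan is to prove the two implications separately, with Lemmas~\ref{lem:MayaLem} and~\ref{lem:theConverse} doing essentially all of the work; the only feature of the complete intersection hypothesis that I will need is the special shape of the Koszul differential. Since $H(D) = R/I$ with $I$ generated by a regular sequence $f_1,\dots,f_c$, the minimal free resolution $F_\bullet$ of $H(D)$ is the Koszul complex $K(f_1,\dots,f_c)$; in particular $F_0 = R$, $\im d_1 = I$, and $F_i = 0$ for $i > c$. Note also that $D$ and $\fold(F_\bullet)$ share the underlying free module $\bigoplus_i F_i$, so the splitting of morphisms of free flags recalled just before Lemma~\ref{lem:theConverse} applies to any morphism between them.

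For the implication $(\Leftarrow)$, I would apply Lemma~\ref{lem:MayaLem} with $m = c$. The hypothesis $\im A_{i,0}^D \subset \im d_1$ for all $i \geq 2$ is exactly the hypothesis of that lemma (the constraint for $i > c$ being vacuous since $F_i = 0$), so $D$ is isomorphic to a differential module $D'$ with $A_{i,\ell}^{D'} = 0$ for all $2 \leq i-\ell \leq c$ and $A_{i,i-1}^{D'} = d_i^F$ for all $i$. Because $F_i = 0$ for $i > c$, every remaining block $A_{i,\ell}^{D'}$ with $i - \ell \geq 2$ also vanishes, so the differential of $D'$ is the block matrix whose only nonzero entries are the $d_i^F$ along the superdiagonal; that is, $D' = \fold(F_\bullet)$, and hence $D \cong \fold(F_\bullet)$.

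For the implication $(\Rightarrow)$, the key point is that the Koszul differential satisfies
$$d_i^F(F_i) \subseteq I\cdot F_{i-1} \qquad \text{for all } i \geq 1,$$
which is immediate from the formula $d^F(e_{j_1}\wedge\cdots\wedge e_{j_i}) = \sum_k (-1)^{k-1} f_{j_k}\, e_{j_1}\wedge\cdots\widehat{e_{j_k}}\cdots\wedge e_{j_i}$ (equivalently: in the Koszul DG-algebra every homogeneous element of positive degree is a sum of products of degree-one generators, each sent by $d^F$ into $I$). Now suppose $D \cong \fold(F_\bullet)$ and fix an isomorphism $\phi \colon D \to \fold(F_\bullet)$, which splits as $\phi = \sum_{j \leq i}\phi_{i,j}$ with $\phi_{i,j}\colon F_i \to F_j$. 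Then for every $i \geq 2$,
$$\phi_{i-1,0}\big(d_i^F(F_i)\big) \subseteq \phi_{i-1,0}\big(I\cdot F_{i-1}\big) = I\cdot \phi_{i-1,0}(F_{i-1}) \subseteq I = \im d_1,$$
which is precisely the hypothesis needed to invoke Lemma~\ref{lem:theConverse}. That lemma then gives $\im A_{i,0}^D \subset \im d_1$ for all $i \geq 2$, as desired.

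So the proof is short once the two lemmas are in hand, and the complete intersection hypothesis enters at exactly one place: the containment $d_i^F(F_i) \subseteq I\cdot F_{i-1}$. This is where an attempt to drop the hypothesis fails, since for a general minimal free resolution one only has $\im d_i^F \subseteq \m F_{i-1}$; indeed Example~\ref{ex:cantCancel} already exhibits an anchored free flag isomorphic to the fold of the (non-Koszul) minimal free resolution of its homology but with $\im A^D_{2,0} \not\subset \im d_1$. I do not anticipate a serious obstacle; the one spot meriting a line of care is the bookkeeping that the maps $\phi_{i-1,0}$ appearing in Lemma~\ref{lem:theConverse} are $R$-linear with target $F_0 = R$, so that $I\cdot \phi_{i-1,0}(F_{i-1}) \subseteq I$.
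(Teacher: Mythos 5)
Your proposal is correct and takes essentially the same route as the paper: the backward implication is Lemma \ref{lem:MayaLem} (with the finite length of the Koszul complex making the remaining blocks vanish), and the forward implication invokes Lemma \ref{lem:theConverse} using precisely the paper's key observation that the Koszul differential satisfies $d_i^F(F_i) \subseteq \mfa F_{i-1}$, so that $\phi_{i-1,0}(d_i^F(F_i)) \subseteq \mfa = \im d_1$ and the lemma's hypothesis holds automatically.
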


\begin{proof}
Let $H(D) = R/\mfa$ where $\mfa$ is generated by a regular sequence. By definition of the Koszul complex, the minimal free resolution of $R/ \mfa$ satisfies $d_i^F \otimes R/\mfa = 0$ for all $i$, whence the assumption $\phi_{i-1,0} (d_i^F (F_i)) \subset \im d_1^F$ of Lemma \ref{lem:theConverse} is trivially satisfied.
\end{proof}

Under the perspective of free flags as upper triangular block matrices, this says that the property of $D$ being isomorphic to the resolution of its homology is completely detectable via only the top row of the matrix. On the other hand, in the case where $D$ is graded, the degrees of the entries of the top row may be deduced by using this grading. Adding this additional structure gives strong restrictions on the possibilities for differential modules $D$ with complete intersection homology. In the most restrictive case, when $H(D)\cong k$, we have the following.

\begin{cor}\label{cor:koszulDegs}
Let $D\in \dm(R,a)$ be an anchored free flag with $H(D) \cong k$. If $a\neq 2$, then $D \cong \fold (K_\bullet)$, where $K_\bullet$ denotes the Koszul complex resolving $k$.
\end{cor}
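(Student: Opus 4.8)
The plan is to combine Theorem~\ref{cor:CIequiv} with a degree count coming from the multidegree-$a$ grading on $D$. Since $D$ is an anchored free flag with $H(D)\cong k = R/\m$, and $\m$ is certainly generated by a regular sequence only when $R$ is regular; more precisely the minimal free resolution of $k$ is the Koszul complex $K_\bullet$ on a minimal generating set $x_1,\dots,x_n$ of $\m$ exactly when $R$ is regular, but in fact all we need is that $k$ is a complete intersection quotient $R/\m$ in the sense used in Theorem~\ref{cor:CIequiv}, i.e.\ that $\m$ be generated by a regular sequence (equivalently $R$ regular), so the statement should be read in that setting and $K_\bullet$ is literally the Koszul complex. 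By Theorem~\ref{cor:CIequiv}, to conclude $D\cong\fold(K_\bullet)$ it suffices to show $\im A^D_{i,0}\subseteq\im d_1 = \m F_0$ for all $i\ge 2$. Equivalently, we must rule out the possibility that some $A_{i,0}\colon F_i\to F_0$ has a unit entry, i.e.\ is nonzero mod $\m$.

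First I would set up the grading bookkeeping. Normalize $F_0 = R$ sitting in internal degree $0$, so that $F_i$ in the Koszul complex is a direct sum of copies of $R$ generated in internal degree $i$ (each basis element is a wedge of $i$ of the degree-one variables, recording that $\m$ is generated in degree $1$ — if the grading on $R$ is not standard one argues with the generating degrees, but the key point is monotonicity). The differential $d^D$ is a graded map $D\to D(a)$, and its restriction to $F_i\to F_{i-1}$ is $d_i$, the Koszul differential, which shifts internal degree by exactly $1$; this forces the normalization to be consistent, namely $F_i$ lives in internal degree $i$ and $a$ plays against it so that $d_i$ is homogeneous of the right degree. Then the block $A_{i,0}\colon F_i\to F_0$ is a graded map $F_i \to F_0(a)$, so as a matrix it has entries that are homogeneous elements of $R$ of internal degree $i - a$ (up to the fixed normalization; I would pin down the exact value by tracking the $a$-shift through $d_i$). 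A unit entry would have to lie in degree $0$, which forces $i - a = 0$, i.e.\ $i = a$.

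Now I would split into cases on $a$. If $a \le 0$, or if $a = 1$, then $i - a \ge 1$ for every $i \ge 2$ (when $a\le 0$) and for $i\ge 2$ when $a=1$, so every entry of $A_{i,0}$ has positive internal degree, hence lies in $\m$, hence $\im A_{i,0}\subseteq \m F_0 = \im d_1$. If $a \ge 3$, then for $a = i$ we would need $i \ge 3$, and one must separately check $i = a$: here $A_{a,0}$ could a priori have a unit entry, so the naive degree count alone does not kill this. This is the step I expect to be the main obstacle, and the reason the hypothesis is $a \neq 2$ rather than $a \le 1$. The resolution is that even if $A_{a,0}$ has a unit entry, one can clear it by a \emph{change of basis} (row/column operations on the flag), exactly as in Example~\ref{ex:cantCancel} and Lemma~\ref{lem:MayaLem}: a unit entry of $A_{a,0}$ lets us split off a trivial (contractible) summand of $D$, replacing $D$ by an isomorphic anchored free flag of strictly smaller rank with the same homology. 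Since $F_0 = R$ has rank $1$, at most one generator of $F_a$ can map to a unit multiple of the generator of $F_0$, and after cancelling it the new $A_{a,0}$ is genuinely zero mod $\m$; iterating, we reach a $D'\cong D$ with $\im A^{D'}_{i,0}\subseteq\m F_0$ for all $i\ge 2$. The only case where this fails is $a = 2$: there $A_{2,0}$ maps $F_2$ (generated in degree $2$) to $F_0(2)$, so its entries are degree-zero, and moreover $F_2$ has rank $\binom{n}{2}$ which can exceed $1$, so after cancelling one unit entry the remaining entries of $A_{2,0}$ are constants that need not be in $\m$ and cannot be further cancelled — this is precisely the source of the exotic degree-$2$ examples promised in the introduction. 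Finally, once $\im A^D_{i,0}\subseteq\im d_1$ for all $i\ge 2$ is established, Theorem~\ref{cor:CIequiv} gives $D\cong\fold(K_\bullet)$, completing the proof.
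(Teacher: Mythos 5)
Your overall strategy---reduce to Theorem \ref{cor:CIequiv} and rule out unit entries in the blocks $A_{i,0}$ by a degree count---is exactly the paper's, but your degree bookkeeping is wrong, and the error propagates into a spurious case together with an unsound fix. The normalization ``$F_i$ generated in internal degree $i$'' is not compatible with the differential having degree $a$: all blocks of $d^D$, including the anchored Koszul blocks $A_{i,i-1}=d_i$, are components of the single graded map $D\to D(a)$, so the twists on the $F_i$ are forced by requiring the linear Koszul entries to be homogeneous of the same degree $a$; with $F_0=R$ this gives $F_i\cong R(i(a-1))^{\binom{n}{i}}$. With these twists the entries of $A_{i,0}$ are homogeneous of degree $i-a(i-1)$, not $i-a$. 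For $i\ge 2$ this vanishes only when $a=i/(i-1)$, i.e.\ only when $a=2$ and $i=2$; for $a\le 1$ the degree is positive (entries lie in $\m$), and for $a\ge 3$ it is negative (entries are zero). So for every $a\neq 2$ one gets $\im A_{i,0}\subseteq \m F_0=\im d_1$ for all $i\ge 2$ directly, and Theorem \ref{cor:CIequiv} finishes the proof; there is no residual case $i=a\ge 3$. That is precisely the paper's argument.

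The step you flag as ``the main obstacle'' is therefore an artifact of the miscount, and the patch you propose for it does not work. If some $A_{a,0}$ really had a unit entry, then $d^D\otimes k\neq 0$, so after a change of basis $D$ would split off a contractible summand; since $\fold(K_\bullet)$ is minimal it has no such summand, and comparing ranks shows $D$ could not be isomorphic to $\fold(K_\bullet)$ at all. Splitting off the contractible piece produces a smaller-rank module that is a direct summand of $D$, not ``an isomorphic anchored free flag of strictly smaller rank,'' so you cannot cancel the unit and still conclude $D\cong\fold(K_\bullet)$---if that case could occur, the corollary would simply be false for that degree. The correct degree computation shows it cannot occur, and this is exactly why the hypothesis is $a\neq 2$ rather than $a\le 1$.
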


\begin{proof}
Since $D$ has degree $a$, the minimal free resolution of $H(D)$ is given by the Koszul complex $K_\bullet$ with the $i^{th}$ free module twisted by $ia$. The maps $A_{i,j}\colon R(ia-a)\to R(ja-j+a)$ in the differential on $D$ therefore have degree $(ja-j+a)-(ia-i)$. When $\deg A_{i,0}\neq 0$, we have $\im A_{i,0}\otimes k = 0$. On the other hand, $(ja-j+a)-(ia-i) = 0$ only when $a=2$ and $i-j=2$. Thus for $a\neq 2$ the result follows from Corollary \ref{cor:CIequiv}.
\end{proof}

The above corollary implies that in degree $a\neq 2$, all differential $R$-modules with homology $k$ have isomorphic anchored free flag resolutions, and that furthermore this resolution is minimal and isomorphic to the Koszul complex. However, this is not true for differential modules of degree $2$. In general, any $R$-module may be viewed as a degree $a$ differential module, for any integer $a$, and the following result shows that the homological invariants of $M \in \dm (a, R)$ can vary as the degree $a$ varies.

\begin{prop}\label{prop:smallrank}
Let $S := k[x_1, \ldots, x_n]$ be a standard graded polynomial ring over a field $k$, where $n\geq 2$. Then there exists a degree $2$ free differential module $D$ of rank $2^{n-1}$ with $H(D) \cong k$.
\end{prop}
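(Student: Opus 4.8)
The plan is to induct on $n$, beginning from an explicit rank-$2$ base case that already exhibits the gain available in degree $2$: over $S$ one can ``resolve'' two variables at the cost of only a rank-$2$ module. First I would set $D^{(2)}:=(S^2,d^{(2)})$, where $d^{(2)}$ is the composite
\[
S^2\xrightarrow{\ \tau\ }S\xrightarrow{\ \sigma\ }S^2,\qquad \tau=\begin{pmatrix}x_1 & x_2\end{pmatrix},\quad \sigma=\begin{pmatrix}x_2\\-x_1\end{pmatrix},
\]
equivalently the matrix $\bigl(\begin{smallmatrix}x_1x_2 & x_2^2\\ -x_1^2 & -x_1x_2\end{smallmatrix}\bigr)$. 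Since $\tau\sigma=0$ we get $(d^{(2)})^2=\sigma(\tau\sigma)\tau=0$, and all entries are quadratic, so $D^{(2)}\in\dm(2,S)$. As $x_1,x_2$ is a regular sequence in $S$, the map $\sigma$ is injective and $\ker\tau=\im\sigma$ is free of rank $1$; hence $\ker d^{(2)}=\ker\tau=\im\sigma$ and $\im d^{(2)}=\sigma(\im\tau)=\sigma((x_1,x_2))$, so $H(D^{(2)})\cong S/(x_1,x_2)$.

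Next, assuming inductively that $D^{(i-1)}\in\dm(2,S)$ is free of rank $2^{i-2}$ with $H(D^{(i-1)})\cong S/(x_1,\dots,x_{i-1})$ for some $3\leq i\leq n$, I would form the mapping cone of multiplication by $x_i$: put $D^{(i)}:=D^{(i-1)}(-1)\oplus D^{(i-1)}$ with differential
\[
d^{(i)}=\begin{pmatrix}d^{(i-1)} & x_i\\ 0 & -d^{(i-1)}\end{pmatrix}.
\]
This squares to zero because $x_i$ is central and so commutes with $d^{(i-1)}$; it is free of rank $2\cdot 2^{i-2}=2^{i-1}$; and the twist $(-1)$ on the first summand is exactly what is needed so that the off-diagonal block ``$x_i$'', which as plain multiplication raises internal degree by $1$, becomes a degree-$2$ map of the ambient module, making $D^{(i)}\in\dm(2,S)$.

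To identify $H(D^{(i)})$ I would use the short exact sequence of differential modules $0\to D^{(i-1)}(-1)\to D^{(i)}\to D^{(i-1)}\to 0$ together with the long exact sequence in homology (available since $\dm(S)$ is abelian; for differential modules it is $3$-periodic). Its connecting homomorphism $H(D^{(i-1)})\to H(D^{(i-1)}(-1))$ is multiplication by $x_i$, just as for an ordinary mapping cone; since $x_1,\dots,x_n$ is a regular sequence, $x_i$ is a nonzerodivisor on $H(D^{(i-1)})\cong S/(x_1,\dots,x_{i-1})$, so this map is injective and exactness forces $H(D^{(i)})\cong H(D^{(i-1)})/x_iH(D^{(i-1)})\cong S/(x_1,\dots,x_i)$. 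Running the induction from $D^{(2)}$ up to $i=n$ then yields a free module $D:=D^{(n)}\in\dm(2,S)$ of rank $2^{n-1}$ with $H(D)\cong S/(x_1,\dots,x_n)=k$, which is exactly what is claimed.

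The point I expect to require the most care is the degree bookkeeping for the iterated cone: one must verify that the internal-degree shifts can be chosen consistently so that every block of $d^{(n)}$ is homogeneous of degree $2$, and that the connecting map in the long exact sequence is genuinely multiplication by $x_i$ (rather than zero or some twist of it). Both are routine once set up carefully, and the remaining points — that $d^{(i)}$ squares to zero, the rank count, and the identity $H(D^{(i-1)})/x_iH(D^{(i-1)})=S/(x_1,\dots,x_i)$ coming from regularity — are immediate. (Equivalently, the whole induction can be packaged at once as $D^{(2)}\otimes_S\fold(K)$, where $K$ is the Koszul complex on $x_3,\dots,x_n$ suitably twisted to sit in degree $2$, whose homology is $\Tor^S_\bullet(S/(x_1,x_2),\,S/(x_3,\dots,x_n))=S/(x_1,\dots,x_n)$ since $x_1,\dots,x_n$ is regular; the iterated-cone formulation is preferable only in that it avoids discussing tensor products of differential modules with folds of complexes.)
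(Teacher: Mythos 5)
Your proposal is correct, and the construction is the same as the paper's: the identical rank-$2$ base case (your matrix differs from the paper's only by a sign) followed by iterated mapping cones of multiplication by the next variable, with the twist arranged so every block stays of degree $2$. Where you diverge is in how the homology of each cone is identified. The paper computes cycles and boundaries of $d^{(i)}$ by hand, crucially using that the previous differential is a matrix over the smaller ring $k[x_1,\dots,x_{i-1}]$ (so $x_i$ divides no entry of $dh$), and then base-changes at each stage so the homology is always $k$; you instead stay over the fixed ring $S$, keep homology $S/(x_1,\dots,x_i)$, and invoke the $3$-periodic exact triangle in homology attached to the short exact sequence $0\to D^{(i-1)}(-1)\to D^{(i)}\to D^{(i-1)}\to 0$, with connecting map multiplication by $x_i$, which is injective by regularity. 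Your route is more conceptual and slightly more general (it only needs $x_i$ to be a nonzerodivisor on the homology, not that the old differential avoids $x_i$), at the cost of needing the long exact sequence for differential modules — which is standard, since $\dm(R)$ is the category of $R[x]/(x^2)$-modules and the snake-lemma argument goes through verbatim (it also appears in the Avramov--Buchweitz--Iyengar framework the paper cites), but should be cited or proved rather than just asserted. Two small points to tidy: the quotient term in your short exact sequence carries the differential $-d^{(i-1)}$ rather than $d^{(i-1)}$, which is harmless because kernel and image, hence homology, are unchanged by negating the differential; and the connecting map, being of degree $2$, is multiplication by $x_i$ only up to the appropriate internal twist, which likewise does not affect the conclusion $H(D^{(i)})\cong S/(x_1,\dots,x_i)$.
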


\begin{proof}
We construct the differential module inductively. For $n=2$, let $D = S^2$ with differential given by the matrix $\begin{pmatrix}
x_1x_2 & -x_2^2\\
x_1^2 & -x_1x_2
\end{pmatrix}$. One can check that $D$ is a degree $2$ differential module, and moreover that $H(D) \cong k$. 

Now let $(F,d)$ be a free differential module of rank $2^{n-2}$ over $R = k[x_1, \ldots, x_{n-1}]$. Let $S = k[x_1, \ldots, x_n]$ and $F' = (F\otimes_R S)\oplus (F\otimes_R S(1))$, take $d'\colon F'\to F'(2)$ to be the map given by the block matrix $\begin{pmatrix} -d\otimes_R S & x_n\Id\\ 0 & d\otimes_R S\end{pmatrix}$ (note that $F'$ is the mapping cone of the map $F\otimes_R S\to F\otimes_R S$ given by multiplication by $x_n$). The matrix $d'$ squares to 0, so $F'$ is a rank $2^{n-1}$ differential module of degree 2. We claim that $H(F)\simeq H(F')$. 

We can compute the homology of $F'$ directly by computing the kernel and image of $d'$, denoted $Z$ and $B$, respectively. A priori, 
\[ B = \left\langle (df, 0), (x_nf, df) ~|~ f\in F (-2)\right\rangle,\] and
\[
Z = \left\langle    (g_1, g_2) ~|~ dg_2 = 0 \text{ and } dg_1 = x_ng_2   \right\rangle.
\]

We claim that $Z$ is actually generated by elements of the form $(x_nf, df)$ and $(h, 0)$ where $h\in \ker(d)$ and $f\in F (-2)$. It is evident that elements of this form are in $Z$, so suppose that $(g_1, g_2)\in Z$; in particular, $dg_1 = x_ng_2$. We can write $g_1$ as $x_nf+h$ where $x_n$ does not divide any component of $h$. Since $d$ is a matrix over $R$, the variable $x_n$ also does not divide any component of $dh$. But $dg_1 = dx_nf + dh = x_ndf+dh = x_ng_2$, which means $dh = 0$ and $x_ndf = x_ng_2$, so $g_2 = df$. It follows that $(g_1, g_2) = (x_nf, df) + (h,0)$ for $h\in \ker(d)$, so $Z$ is generated by elements of the form $(x_nf, df)$ and $(h,0)$ for $f\in F$ and $h\in \ker(d)$.

Combining all of the above information, one finds:
\begingroup\allowdisplaybreaks
\begin{align*}
H(F') = \frac{Z}{B} &= \frac{\left\langle (x_nf, df), (h,0) ~|~ f\in F(-2), h\in\ker(d)  \right\rangle}{\left\langle (df, 0), (x_nf, df) ~|~ f\in F(-2)\right\rangle} \\
&= \frac{\langle (h,0) ~|~ h\in\ker(d)\rangle}{\langle (df, 0) ~|~ f\in F(-2)\rangle} \\
&\simeq \frac{\ker(d)}{\im(d(-2))} = H(F).
\end{align*}
\endgroup

Thus using our example for $n=2$, a rank $2^{n-1}$ free differential module with homology $k$ can be constructed for any $n\geq 2$.
\end{proof}


If we define the \emph{Betti numbers} of a differential module as in \cite{brown2021minimal}, then the sum of the Betti numbers of $D$ is equal to the rank of the minimal free resolution of $D$. We next show that the differential module constructed in \ref{prop:smallrank} is actually a minimal free resolution. This will imply that the sum of the betti numbers of a degree 2 differential module with homology $k$ may be at least as small as $2^{n-1}$, strictly smaller than the total rank of a minimal free resolution with the same homology. Although this differential module is certainly free and minimal, it is not immediately obvious that it is a minimal free \emph{resolution}, since this requires it to be a summand of a free flag resolution. To prove that this is indeed the case, we leverage the mapping cone structure of the differential module constructed in \ref{prop:smallrank}.

\begin{lemma}
Let $F$ be a free differential $S$-module of degree $a$ and let $\phi\colon F\to F$ be the map given by multiplication by $f$. Suppose as in Proposition 4.1 of \cite{brown2021minimal} that $F = M\oplus T$, where $M$ is minimal and $T$ is contractible. Then 
\[\cone(\phi)\simeq \cone(\phi|_M)\oplus T'\] 
where $T'$ is contractible.
\end{lemma}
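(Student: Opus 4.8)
The plan is to reduce everything to the observation that $\phi$, being multiplication by the ring element $f$, is \emph{central}: it commutes with every $S$-linear map, and in particular it respects the given decomposition $F = M\oplus T$ of differential modules. Thus $\phi = \phi|_M \oplus \phi|_T$, where $\phi|_M$ and $\phi|_T$ are again multiplication by $f$ on $M$ and $T$. Since the mapping cone is additive — recall that, up to an internal twist by $\deg f$, the underlying module of $\cone(\psi)$ for $\psi\colon D\to D$ is $D\oplus D$ with differential in block form $\begin{pmatrix} -d & \psi \\ 0 & d \end{pmatrix}$, so reindexing the four summands of $\cone(\phi)$ into a $2\times 2$ block over $M$ and a $2\times 2$ block over $T$ makes the differential block diagonal — we get at once $\cone(\phi) \cong \cone(\phi|_M) \oplus \cone(\phi|_T)$. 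Setting $T' := \cone(\phi|_T)$, the lemma reduces to the claim that $T'$ is contractible.

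For that step I would exhibit an explicit contracting homotopy. Let $s\colon T\to T$ be a contracting homotopy for $T$, so $d^Ts + sd^T = \Id_T$ (an internal-degree $-a$ map), and write $\delta := d^T$, $m := \phi|_T$; note $m$ commutes with both $\delta$ and $s$ since it is multiplication by $f$. The differential of $T' = \cone(\phi|_T)$ is $D = \begin{pmatrix} -\delta & m \\ 0 & \delta \end{pmatrix}$, and I claim $\Sigma := \begin{pmatrix} -s & 0 \\ 0 & s \end{pmatrix}$ is a contracting homotopy: a direct computation gives $D\Sigma + \Sigma D = \begin{pmatrix} \delta s + s\delta & ms - sm \\ 0 & \delta s + s\delta \end{pmatrix} = \begin{pmatrix} \Id & 0 \\ 0 & \Id \end{pmatrix}$, where the upper-right entry vanishes because $ms = sm$. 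Hence $T'$ is contractible, and $\cone(\phi) \cong \cone(\phi|_M) \oplus T'$ as desired.

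The computation is really the only content and it is routine; the single point needing care is the internal-grading bookkeeping (the cone twists one summand by $\deg f$, while $s$ has internal degree $-a$), but because the homotopy $\Sigma$ I use is block diagonal it maps each summand to itself with the correct degree, so no grading obstruction actually arises. The one genuinely delicate thing — and the place I expect a careless argument to go wrong — is keeping the sign convention in the cone differential consistent with the sign in $\Sigma$; with the convention $\begin{pmatrix} -d & \phi \\ 0 & d \end{pmatrix}$ used elsewhere in the paper, the choice $\Sigma = \begin{pmatrix} -s & 0 \\ 0 & s \end{pmatrix}$ is forced, and it works.
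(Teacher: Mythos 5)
Your proof is correct and follows essentially the same route as the paper: your ``additivity of the cone'' step is exactly the paper's reindexing of the four summands $M\oplus T\oplus M(a)\oplus T(a)$ via row and column operations, yielding $\cone(\phi)\cong\cone(\phi|_M)\oplus\cone(\phi|_T)$. The only difference is that the paper simply asserts the contractibility of $T'=\cone(\phi|_T)$, whereas you verify it with the explicit homotopy $\Sigma=\left(\begin{smallmatrix}-s & 0\\ 0 & s\end{smallmatrix}\right)$, using that multiplication by $f$ commutes with $s$ --- a correct and welcome filling-in of that detail.
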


\begin{proof}
Written as a block matrix, $\phi\colon M\oplus T\to M\oplus T$ has the form $\begin{pmatrix}
f & 0\\ 0 & f
\end{pmatrix}$. By definition, $ \cone(\phi) = F\oplus F(a) = M\oplus T \oplus M(a)\oplus T(a)$ with differential given by the block matrix
\[
d = \begin{pmatrix}
-d_M & 0 & f & 0\\
0 & -d_T & 0 & f\\
0 & 0 & d_M & 0\\
0 & 0 & 0 & d_T
\end{pmatrix}
\]

We can perform row and column operations to see that this is isomorphic to the differential module $M\oplus M(a)\oplus T\oplus T(a)$ with differential 
\[
d' = \begin{pmatrix}
-d_M & f & 0 & 0\\
0 & d_M & 0 & 0\\
0 & 0 & -d_T & 0\\
0 & 0 & 0 & d_T
\end{pmatrix}
\]
so $\cone(\phi)\simeq \cone(\phi|_M)\oplus T'$ where $T' \simeq T\oplus T(a)$ is contractible.

\end{proof}



\begin{cor}
Let $D_n$ be the rank $2^{n-1}$ free differential module over $k[x_1, \ldots, x_n]$ defined in Proposition \ref{prop:smallrank}. Then $D_n$ is a minimal free resolution of a differential module with homology $k$.
\end{cor}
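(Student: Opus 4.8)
The plan is to prove the statement by induction on $n$, exhibiting $D_n$ as the minimal summand of a free flag resolution built by iterating the mapping-cone construction of Proposition~\ref{prop:smallrank}; the engine for the inductive step is the lemma immediately above. First I would record that $D_n$ is minimal: in the block form of Proposition~\ref{prop:smallrank} the differential of $D_n$ has entries among those of $d^{D_{n-1}}$ together with $x_n$, so minimality of $D_{n-1}$ forces all of them into the maximal ideal. By \cite[Proposition~4.1]{brown2021minimal} and the definition of a minimal free resolution, it then suffices to produce, for every $n$, a free flag resolution $G_n\to D_n'$ together with an isomorphism $G_n\cong D_n\oplus T_n$ with $T_n$ contractible; the resolved module $D_n'$ will automatically have homology $k$ since $H(D_n)=k$ by Proposition~\ref{prop:smallrank}.

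For the base case $n=2$, I would take $G_2$ to be the anchored free flag over $k[x_1,x_2]$ whose underlying free module is $\fold$ of the Koszul complex resolving $k$, twisted so that $G_2$ becomes a degree-$2$ differential module as in the proof of Corollary~\ref{cor:koszulDegs}, and whose only off-diagonal block outside the first subdiagonal is the corner map $A_{2,0}=\Id$. Observation~\ref{obs:DMsCoordFree} imposes only the Koszul relation $d_1 d_2=0$ on a three-step flag $F_0\oplus F_1\oplus F_2$, so $G_2$ is a genuine anchored free flag. Gaussian elimination against the unit $A_{2,0}$ splits off a contractible rank-two summand and, after one basis sign change, identifies the residual rank-two summand with the module $D_2$ of Proposition~\ref{prop:smallrank}; in particular $H(G_2)=H(D_2)=k$, so the projection $G_2\to D_2$ is a free flag resolution with $D_2$ as its minimal summand.

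For the inductive step ($n\geq 3$), suppose $G_{n-1}\to D_{n-1}'$ is a free flag resolution over $S_{n-1}:=k[x_1,\dots,x_{n-1}]$ with $H(D_{n-1}')=k$ and $G_{n-1}\cong D_{n-1}\oplus T_{n-1}$, $T_{n-1}$ contractible. Base change along the flat map $S_{n-1}\hookrightarrow S_n:=k[x_1,\dots,x_n]$ preserves the relevant features: $G_{n-1}\otimes_{S_{n-1}}S_n$ is a free flag resolution of $D_{n-1}'\otimes_{S_{n-1}}S_n$, it splits as $(D_{n-1}\otimes S_n)\oplus(T_{n-1}\otimes S_n)$ with the first summand minimal and the second contractible, and $H(D_{n-1}'\otimes S_n)=S_n/(x_1,\dots,x_{n-1})S_n$. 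I would then set $G_n:=\cone\bigl(x_n\colon G_{n-1}\otimes S_n\to G_{n-1}\otimes S_n\bigr)$ and check: (i) $G_n$ is again a free flag, the mapping cone of a flag-compatible endomorphism carrying the evident flag structure; (ii) $G_n$ is a free flag resolution of $D_n':=\cone(x_n\colon D_{n-1}'\otimes S_n\to D_{n-1}'\otimes S_n)$ with $H(D_n')=k$, since the three-periodic homology sequence of a mapping cone computes $H(D_n')$ as the cokernel of multiplication by $x_n$ on $S_n/(x_1,\dots,x_{n-1})S_n$, namely $S_n/(x_1,\dots,x_n)=k$; and (iii) applying the preceding lemma to $\phi=x_n\cdot$ on $G_{n-1}\otimes S_n=(D_{n-1}\otimes S_n)\oplus(T_{n-1}\otimes S_n)$ gives $G_n\simeq\cone(x_n\colon D_{n-1}\otimes S_n\to D_{n-1}\otimes S_n)\oplus T_n$ with $T_n$ contractible, where the first summand is precisely $D_n$ by the construction in Proposition~\ref{prop:smallrank}. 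Since $D_n$ is minimal, this realizes $D_n$ as the minimal summand of the free flag resolution $G_n\to D_n'$, completing the induction.

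The main obstacle I expect is bookkeeping rather than ideas. One must track the internal twists carefully --- the cone over $x_n$ twists by $1$, not by the differential-module degree $2$ --- so that the object produced by the preceding lemma literally coincides with the module $F'$ built in Proposition~\ref{prop:smallrank}, and one must verify that ``free flag'', ``minimal'', ``contractible'' and ``quasi-isomorphism'' all genuinely survive $-\otimes_{S_{n-1}}S_n$ and $\cone(x_n\cdot(-))$. The base case also costs one honest Gaussian-elimination computation to identify the residual summand as $D_2$.
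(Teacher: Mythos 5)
Your proposal is correct and follows essentially the same route as the paper: induction on $n$, with the base case realized by splitting $D_2$ off the rank-four anchored flag having a unit in its corner, and the inductive step given by tensoring the flag resolution up to $k[x_1,\dots,x_n]$, taking the cone of multiplication by $x_n$, and invoking the preceding lemma to split the cone as $D_n$ plus a contractible summand inside a free flag resolution with homology $k$. The only differences are cosmetic (you verify the flag structure on the cone and compute the homology via the exact triangle directly, where the paper identifies the cone's underlying module with the fold of the Koszul complex on $n$ variables and cites Brown--Erman for the base case).
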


\begin{proof}
Proceed by induction on $n$. For $n=2$, the module $S\oplus S$ with differential $\begin{pmatrix} xy & x^2\\ -y^2 & xy\end{pmatrix}$ is its own minimal free resolution (See \cite{brown2021minimal}, Example 5.8). In particular, it is a minimal free summand of the free $S$-module $S\oplus S(1)^2 \oplus S(2)$ with differential 
\[
\begin{pmatrix}
0 & x & y & 1\\
0 & 0 & 0 & -y\\
0 & 0 & 0 & x\\
0 & 0 & 0 & 0
\end{pmatrix}.
\]

On the other hand, in degrees $a\neq 2$, Corollary \ref{cor:koszulDegs} tells us that any differential module $D$ with $H(D)\cong k$ has as its minimal free resolution the fold of the Koszul complex resolving $k$, which has rank $2^n$. This tells us that the Betti numbers of a differential module over $k[x_1, \ldots, x_n]$ with homology $k$ must be at least $2^n$ in degree $a\neq2$ but may be as small as $2^{n-1}$ in degree $2$.

Note that this is a degree 2 differential module whose underlying module is isomorphic to that of $\fold(K_\bullet)$, where $K_\bullet$ is the Koszul complex on two variables, twisted to be homogeneous degree $2$ but whose underlying differential is $d_K + \alpha$ where $\alpha$ is a block matrix whose nonzero entries lie in blocks strictly above those of $d_K$.

Now assume that $D_{n-1}$ is a minimal free resolution. Since its homology is $k$, it is a summand of a free flag resolution $F_{n-1}$ whose underlying module is isomorphic to $\fold(K_\bullet)$, the Koszul complex on $n-1$ variables, with differential $d_K + \alpha$ for some map $\alpha$ such that when considered as a block matrix, all nonzero blocks lie strictly above the nonzero blocks of $d_F$. Let $f\colon F_{n-1}\otimes S\to F_{n-1}\otimes S$ be multiplication by $x_n$. Taking the cone of $f$, we have $\cone(f)\simeq D_n\oplus T_n$ where $D_n\simeq \cone(f|_{D_{n-1}})$. On the other hand, the underlying module $\cone(f)$ is isomorphic to that of $\cone(\fold(K_\bullet))$ with differential $\begin{pmatrix}
-d_K-\alpha & x_n\\ 0 & d_K+\alpha
\end{pmatrix}$. If we let $K'_\bullet$ be the Koszul complex on $n$ variables, we see that the underlying module of $\cone(f)$ is thus isomorphic to that of $\fold(K'_\bullet)$ with differential $d_{K'}+\alpha'$ where $\alpha'$ again has the property that when considered as a block matrix all nonzero blocks lie above the nonzero blocks of $d_{K'_\bullet}$. In other words, $\cone(f)$ is a free flag as in Theorem \ref{thm:danMikeThm} that splits as a direct sum of $\cone(f|_{D_{n-1}})\simeq D_n$ with a contractible differential module. By definition, the differential module $D_n$ is a minimal free resolution of $k$ when viewed as an object in $\dm (2, R)$. 
\end{proof}

Note that because the differential module $D$ constructed in Proposition \ref{prop:smallrank} is not a free flag, it does not itself contradict the (disproven) conjecture of Avramov--Buchweitz--Iyengar that the rank of a free flag over $R$ with finite length homology is at least $2^{\dim(R)}$ \cite[Conj. 5.3]{avramov2007class}. In fact, the example when $n=2$ appears in \cite{avramov2007class}, and this construction directly generalizes their example.

\section{A Generalization of the Koszul Complex for Differential Modules}\label{sec:genKoszulCx}

In this section, we introduce a differential module analog of the Koszul complex and show that under certain hypotheses, all anchored free flags are isomorphic to this Koszul differential module. We also provide an example showing that in general not all such free flags with complete intersection homology are obtained by this Koszul differential module; to begin this section, we recall the definition of the Koszul complex that will be most convenient for our purposes.

Let $R$ be a ring and $E$ be a free $R$-module on basis $e_1 , \dots , e_n$ and $\psi \colon E \to R$ any $R$-module homomorphism. The notation $e_I$ will be shorthand for the basis element $e_{i_1} \w \cdots \w e_{i_k}$, where $I = \{ i_1 < \cdots < i_k\}$ is an indexing set of the appropriate size. Recall that the Koszul complex can be constructed as the complex with the $i$th exterior power $\bigwedge^i E$ sitting in homological degree $i$ and differential
$$e_{j_1 , \dots , j_i} \mapsto \sum_{k=1}^i (-1)^{k+1} \psi (e_{j_k}) e_{j_1 , \dots , \widetilde{j_k} , \dots , j_i}.$$
Another way to view this map is as the composition
\begingroup\allowdisplaybreaks
\begin{align*}
    \bigwedge^i E &\xrightarrow{\textrm{comultiplication}} E \otimes \bigwedge^{i-1} E \\
    &\xrightarrow{\psi \otimes 1} R \otimes \bigwedge^{i-1} E \cong \bigwedge^{i-1} E.
\end{align*}
\endgroup
This is equivalently described as multiplication by an element $f \in E^*$ (recall that $\bigwedge^\bullet E$ is a graded $\bigwedge^\bullet E^*$-module and vice versa). Let $A_{i,0} \colon \bigwedge^i E \to R$. Notice that such a map is equivalently induced by multiplication by an element $f_i \in \bigwedge^i E^*$. This is because the pairing $\bigwedge^i E \otimes \bigwedge^{n-i} E \to \bigwedge^n E$ is perfect.

Our goal will now be to generalize the exterior algebra structure of the Koszul complex to define a ``Koszul differential module". We illustrate with an example

\begin{example}\label{ex:koszul}
Let $R = k[x_I \mid I \subset [4], \ I \neq \varnothing]$ and let $f_i$ for $i = 1 , \dots , 4$ be the elements of $\bigwedge^i E^*$ induced by the maps
\begingroup\allowdisplaybreaks
\begin{align*}
     E &\to R &\quad \bigwedge^2 E &\to R &\quad \bigwedge^3 E &\to R &\quad \bigwedge^4 E &\to R \\
     e_i & \mapsto x_i &\quad
     e_{ij} &\mapsto \begin{cases}
     0 & \textrm{if} \ i=1,\\
     x_{ij} & \textrm{otherwise}.
     \end{cases} &\quad
     e_{ijk} &\mapsto x_{ijk} &\quad
     e_{1234} &\mapsto x_{1234}.
\end{align*}
\endgroup
Define a free flag $F$ whose underlying module is $\bigwedge^\bullet E$ and whose differential is given by the maps $A_{i,j}:\bigwedge^iE\to\bigwedge^jE$ defined by $A_{i,j}(g) = (-1)^{ij}f_{i-j}g$. To check that this indeed defines a differential module amounts to checking that for each $i>j$
\[
\sum_{j < k < i} A_{k,j} A_{i,k} = \sum_{j<k<i} (-1)^{kj}(-1)^{ik}f_{k-j}f_{i-k} = 0
\]

In this case, one just needs to verify the relations
\[
f_1^2=0, \quad f_1f_2-f_2f_1 = 0, \quad f_1f_3+f_3f_1+f_2^2 = 0.
\]

\end{example}

We can generalize the construction in Example \ref{ex:koszul} to obtain a differential module in a similar way given a suitably chosen algebra. Recall that an algebra \emph{admits divided powers} if the subalgebra generated by elements of even degree satisfies the axioms of a divided power algebra. A canonical example of such an algebra to keep in mind is the exterior algebra on a free module, where the elements of even degree are the divided power elements.

\begin{prop}\label{prop:genKoszul}
Let $T$ denote any graded-commutative $R$-algebra admitting divided powers and with the structure of a graded $T^*$-module (where $T^*$ denotes the graded dual). Given any $f_i \in T_i^*$, the notation $f_i \colon T_\ell \to T_{\ell-i}$ will denote the left-multiplication map. Assume either:
\begin{enumerate}
    \item $\Char R = 2$, or
    \item $\Char R \neq 2$ and $f_i \cdot f_j = 0$ if both $i$ and $j$ are even.
\end{enumerate}
Define $A_{i,j} := (-1)^{ij} f_{i-j} \colon T_i \to T_j$. Then the data 
$$\{ T_i , \  A_{i,j} \colon T_i \to T_j \mid j<i, \ i \in \bbz \}$$
determines a differential module.
\end{prop}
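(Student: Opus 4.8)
The plan is to verify directly the defining relation of Observation \ref{obs:DMsCoordFree}, namely that $\sum_{j<k<i} A_{k,j}A_{i,k} = 0$ for all $j < i$, by substituting the definition $A_{i,j} = (-1)^{ij}f_{i-j}$ and reducing to a statement purely about the element-wise products $f_a f_b$ inside the graded $T^*$-module structure on $T$. First I would unwind the composition: for $j < k < i$, the map $A_{k,j}A_{i,k}\colon T_i \to T_j$ equals $(-1)^{kj}(-1)^{ik} f_{k-j}\circ f_{i-k}$, which is left-multiplication by $f_{k-j}f_{i-k} \in T^*_{i-j}$ up to the sign $(-1)^{kj+ik}$. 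So the whole sum is left-multiplication by $\sum_{j<k<i} (-1)^{k(i+j)} f_{k-j}f_{i-k}$, and it suffices to show this element of $T^*$ vanishes. Reindexing via $a = k-j$, $b = i-k$ (so $a,b \geq 1$ and $a+b = i-j =: n$), the sign $(-1)^{k(i+j)}$ becomes $(-1)^{(a+j)(i+j)} = (-1)^{(a+j)(a+b+2j)} = (-1)^{a(a+b)}= (-1)^{a(n)}$ after dropping even contributions; so we must show $\sum_{a+b=n,\ a,b\geq 1} (-1)^{an} f_a f_b = 0$.

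The heart of the argument is then a pairing/symmetry trick on this last sum. When $n$ is odd, $(-1)^{an} = (-1)^a$, and the sum is $\sum_{a=1}^{n-1}(-1)^a f_a f_{n-a}$; pairing the term indexed by $a$ with the term indexed by $n-a$, graded-commutativity of $T^*$ gives $f_a f_{n-a} = (-1)^{a(n-a)}f_{n-a}f_a = (-1)^{a(n-a)} f_{n-a}f_a$, and since $n$ is odd exactly one of $a, n-a$ is even, so $a(n-a)$ is even, giving $f_a f_{n-a} = f_{n-a}f_a$; meanwhile $(-1)^a = -(-1)^{n-a}$ since $n$ is odd, so the two terms cancel in pairs (the case $a = n-a$ cannot occur for $n$ odd). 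When $n$ is even, $(-1)^{an} = 1$, and the sum is $\sum_{a=1}^{n-1} f_a f_{n-a}$; here for $a \neq n/2$ we pair $a$ with $n-a$ and use $f_a f_{n-a} = (-1)^{a(n-a)}f_{n-a}f_a$. The parity of $a(n-a)$ when $n$ is even: if $a$ is odd then $n-a$ is odd so $a(n-a)$ is odd, giving $f_a f_{n-a} = -f_{n-a}f_a$ and cancellation; if $a$ is even then $n-a$ is even, and here hypothesis (2) (or characteristic $2$, via hypothesis (1)) forces $f_a f_{n-a} = 0$ outright, so those terms vanish individually — including the diagonal term $a = n/2$ when $n/2$ is even. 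The only remaining worry is the diagonal term $a = n/2$ with $n/2$ odd, i.e. $f_{n/2}^2$ with $n/2$ odd: here graded-commutativity gives $f_{n/2}^2 = (-1)^{(n/2)^2}f_{n/2}^2 = -f_{n/2}^2$ (since $n/2$ odd makes $(n/2)^2$ odd), so $2f_{n/2}^2 = 0$; in characteristic $2$ this is automatic and in characteristic $\neq 2$ it forces $f_{n/2}^2 = 0$.

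I would organize the write-up as: (i) reduce to showing $\sum_{a+b=n,\,a,b\geq1}(-1)^{an}f_af_b=0$ via the sign computation above; (ii) split into $n$ odd / $n$ even; (iii) in each case pair $a \leftrightarrow n-a$ and invoke graded-commutativity, treating the diagonal term $a=n/2$ separately using the hypotheses. I expect the main obstacle to be purely bookkeeping: getting the sign $(-1)^{k(i+j)}$ correctly reduced to $(-1)^{an}$ and then correctly tracking which parity of $a(n-a)$ occurs in each sub-case — there is real danger of an off-by-one in the exponents, so I would double-check by specializing to the exterior-algebra case of Example \ref{ex:koszul}, where the relations $f_1^2 = 0$, $f_1f_2 = f_2f_1$, $f_1f_3 + f_3f_1 + f_2^2 = 0$ should fall out of the general identity for $n = 2, 3, 4$ respectively. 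One subtlety worth a sentence: hypothesis (2) only assumes $f_i f_j = 0$ for $i,j$ both even; the diagonal case $f_{n/2}^2$ with $n/2$ odd is genuinely \emph{not} covered by (2) and must be handled by the graded-commutativity argument, so it is important not to conflate the two.
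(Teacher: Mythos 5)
Your overall strategy is the same as the paper's: verify the relation of Observation \ref{obs:DMsCoordFree} by pairing the term indexed by $k$ with the one indexed by $i+j-k$ (in your notation $a\leftrightarrow n-a$) and using graded-commutativity plus the hypotheses to kill each pair and the diagonal term. In characteristic $\neq 2$ your case analysis is sound (the dropped factor $(-1)^{jn}$ in your sign reduction is a global unit, so reducing to $\sum_{a+b=n}(-1)^{an}f_af_b=0$ is legitimate even though the per-term sign you state is off by that factor).

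The genuine gap is in the characteristic $2$ case. You assert that ``hypothesis (2) (or characteristic $2$, via hypothesis (1)) forces $f_af_{n-a}=0$ outright,'' but hypothesis (1) is only a statement about $\Char R$ and says nothing about products vanishing. For the off-diagonal even--even pairs ($a\neq n-a$, both even) the correct reason in characteristic $2$ is that the pair combines, by graded-commutativity, into $2\,f_af_{n-a}=0$; the individual products need not vanish. More seriously, the diagonal term $f_{n/2}^{2}$ with $n/2$ even appears with coefficient $1$, is not paired with anything, and graded-commutativity gives no sign; in characteristic $2$ it vanishes only because of the divided-powers hypothesis, via $f_{n/2}^{2}=2\,f_{n/2}^{(2)}=0$ --- exactly as in the paper's proof. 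Your write-up never invokes divided powers at all, which is the tell that this subcase is unhandled; with that one step added (and the coefficient-$2$ justification for the off-diagonal even--even pairs in characteristic $2$), the argument is complete and essentially identical to the paper's.
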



\begin{proof}
One only needs to verify that $\sum_{j < k < i} A_{k,j} A_{i,k} = 0$ for all choices of $i$ and $j$, and this is a straightforward computation. Assume that $i+j$ is odd; one computes:
\begingroup\allowdisplaybreaks
\begin{align*}
    \sum_{k=j+1}^{i-1} A_{k,j} \cdot A_{i,k} &= \sum_{k=j+1}^{(i+j-1)/2} (A_{k,j} A_{i,k} + A_{i+j-k,j} A_{i,i+j-k} ) \\
    &= \sum_{k=j+1}^{(i+j-1)/2} \Big( (-1)^{jk + ik} + (-1)^{j(i+j-k)+i(i+j-k)+(i-k)(k-j)} \Big) f_{k-j} f_{i-k}. 
\end{align*}
\endgroup
Thus it suffices to show that the coefficient 
$$(-1)^{jk + ik} + (-1)^{j(i+j-k)+i(i+j-k)+(i-k)(k-j)}$$
is $0$ if $i-k$ or $k-j$ is odd. Since the above expression is symmetric in $i$ and $j$ modulo $2$, it is of no loss of generality to assume that $i \cong_2 k+1$. One computes:
$$jk+ik \equiv_2 jk, \quad \textrm{and}$$
$$j(i+j-k)+i(i+j-k)+(i-k)(k-j) \equiv_2 j(j+1) + (k+1)(j+1) + j+k \equiv_2 jk+1.$$
Thus the coefficient is $0$ if $i-k$ and $k-j$ are not both even, and if they are both even, then $f_{i-k} f_{k-j} = 0$ or appears with coefficient $2$, implying that these terms vanish as well. 

If $i+j$ is even, then the computation is identical, with the only difference being that the term $f_{(i+j)/2}^2$ appears. If $\Char k \neq 2$, then this term vanishes by assumption. If $\Char k =2$, then the assumption that $T$ admits divided powers implies that $f_{(i+j)/2}^2 = 2 f_{(i+j)/2}^{(2)} = 0$, so all terms again vanish.
\end{proof}

 \begin{notation}
Let $f_i \colon T_i \to R$ for $i=1 , \dots , n$ be a collection of maps induced by multiplication by $f_i \in (T_i)^*$, where $T_\bullet$ is any graded-commutative algebra. The notation $K(f_1 , \dots , f_n)$ will denote the (not necessarily differential) module induced by the data $\{ T_i , \ (-1)^{ij} f_{i-j} \}$. 
\end{notation}


We can also construct a ``generic" Koszul differential module; we will see that the theorems below give criteria for which free flags of the appropriate form are obtained as specializations on the generic Koszul differential module.

\begin{construction}[Generic Koszul Differential Module]\label{cons:genericKoszul}
Let $n \in \bbn$ and $A = \bbz [x_I \mid I \subset [n], \ I \neq \varnothing]$. Let $E = \bigoplus_{i=1}^n Ae_i$ and let $f_i \in \bigwedge^i E$ be the generic maps
$$f_i = \sum_{|I| = i} x_I e_I^*.$$
Next, let $I$ be the ideal generated the relations $f_i f_j = 0$ for $i, \ j$ both even. Then define $S := A / I$. Notice by construction $A / I \otimes K(f_1 , \dots , f_n)$ is a differential module with homology isomorphic to $\bbz$. 

Use the notation $K^{\textrm{gen}} := A/I \otimes K(f_1 , \dots , f_n)$.
\end{construction}

\begin{remark}
The relations induced by imposing the condition $f_i \cdot f_j = 0$ for $i$ and $j$ both even are in general quite complicated. The first case for which we obtain nontrivial equations is when $n = 4$ in Construction \ref{cons:genericKoszul}, in which case we are imposing the relation $f_2^{(2)} = 0$. Choosing bases, notice that $f_2$ is represented as a generic $4 \times 4$ skew symmetric matrix and the condition $f_2^{(2)} = 0$ means we are taking the quotient by the $4 \times 4$ pfaffian of this matrix representation.

Notice that in general, the ideal $I$ appearing in Construction \ref{cons:genericKoszul} is always generated by quadratic equations in the $f_i$.
\end{remark}

\begin{example}
Assume $n = 6$ in the notation of Construction \ref{cons:genericKoszul}. Then the relations imposed come from setting 
$$f_2^{(2)} = 0, \quad \textrm{and} \quad f_2 \cdot f_4 =0.$$
The relations $f_{2}^{(2)} = 0$ are precisely the equations of the $4 \times 4$ pfaffians of the $6\times 6$ matrix representation of the map $f_2$. The additional relation $f_2 \cdot f_4 = 0$ contributes, after choosing bases, the single quadratic equation
$$\sum_{\substack{I \subset [6], \\ |I| = 2}} \textrm{sgn} (I \subset [6]) x_I x_{[6] \backslash I} = 0.$$
\end{example}

It may be tempting to believe that all anchored free flags with complete intersection homology arise as specializations of Construction \ref{cons:genericKoszul}. We will see that this is true if the ring has characteristic $2$, but the following example shows that this is not the case in general.

\begin{example}
Let $E = \bigoplus_{i=1}^4 Re_i$, where $R = k[x_1 , \dots , x_4]$ and $k$ is a field of characteristic $\neq 2$. Let $F$ be the free flag defined by the following data:
$$A_{1,0} = A_{2,1} = A_{3,2} = A_{4,3} = x_1^3 e_1^* + x_2^3 e_2^* + x_3^3 e_3^* + x_4^3 e_4^*,$$
$$A_{2,0} = -A_{3,1} = A_{4,2} = x_1 x_2 e_{12}^* + x_2^2 e_{34}^*,$$
$$A_{3,0} = -2x_1 e_{134}^*, \quad A_{4,1} = A_{4,0} = 0.$$

Under a choice of basis, we can write $F = R^1\oplus F^4\oplus R^6\oplus R^4\oplus R^1$ and express the differential $d^F$ as a block matrix. 

\[
\begin{blockarray}{cccccc}
 & R^1 & R^4 & R^6 & R^4 & R^1\\
 \begin{block}{c(ccccc)}
 R^1 & 0 & A_{1,0} & A_{2,0} & A_{3,0} & A_{4,0}\\
 R^4 & 0 & 0 & A_{2,1} & A_{3,1} & A_{4,1}\\
 R^6 & 0 & 0 & 0 & A_{3,2} & A_{4,2}\\
 R^4 & 0 & 0 & 0 & 0 & A_{4,3}\\
 R^1 & 0 & 0 & 0 & 0 & 0 \\
 \end{block}
\end{blockarray}
\]

The blocks $A_{i,i-1}$ on the first off-diagonal are the matrices appearing in the Koszul complex on $(x_1^3, x_2^3, x_3^3, x_4^3)$. The $A_{i,i-2}$ maps are given by the following:

\[
A_{2,0} = \begin{pmatrix}
x_1x_2 & 0 & 0 & 0 & 0 & x_2^2
\end{pmatrix}\
\quad
A_{3,1} = \begin{pmatrix}
0 & 0 & -x_2^2 & 0\\
0 & 0 & 0 & -x_2^2\\
-x_1x_2 & 0 & 0 & 0\\
0 & -x_1x_2 & 0 & 0
\end{pmatrix}
\quad
A_{4,2} = \begin{pmatrix}
x_2^2\\ 0\\ 0\\ 0\\ 0\\ x_1x_2
\end{pmatrix}
\]

and we have $A_{3,0} = \begin{pmatrix}
0 & 0 & 2x_1 & 0
\end{pmatrix}$.

One can check that $d^F$ squares to 0 so this data determines a well-defined free flag. However, $A_{3,0}\neq A_{4,1}$ since $A_{3,0}$ is given by multiplication by a nonzero element and $A_{4,1}$ is not. This means that the free flag induced by the above data is not of the form $K(f_1,f_2,f_3,f_4)$ for any choice of $f_i$.
\end{example}

The above example hinges on the observation that if we let $A_{1,0} = \begin{pmatrix} x_1^3 & x_2^3 & x_3^3 & x_4^3 \end{pmatrix}$ and $A_{4,3} = \begin{pmatrix} -x_4^3 & x_3^3 & -x_2^3 & x_1^3\end{pmatrix}^T$ be the first and last matrices in the Koszul complex on $(x_1^3, x_2^3, x_3^3, x_4^3)$, then $A_{2,0}A_{4,2} + A_{3,0}A_{4,3} = 0$. The condition for $F$ to be a differential module requires that $A_{1,0}A_{4,1} + A_{2,0}A_{4,2} + A_{3,0}A_{4,3} = 0$, which in this case forces $A_{4,1}$ to be 0 modulo $(x_1^3, x_2^3, x_3^3, x_4^3)$. This ``forcing" occurred due to cancellation in $A_{2,0}A_{4,2} + A_{3,0}A_{4,3}$. Our next result shows that absent this cancellation, it is indeed the case that anchored free flags with complete intersection homolopgy as in Construction \ref{cons:genericKoszul} are Koszul differential modules.

This intuition is clarified in the following definition which presents a technical condition that will be needed as a hypothesis for the proof of Theorem \ref{thm:nonCHar2}. 

\begin{definition}
Let $R$ be a ring and $I \subset R$ an ideal. The ideal $I$ is \emph{completely Tor-independent} with respect to a family of ideals $J_1 , \dots , J_\ell$ if for all $1 \leq i_1 < \cdots < i_k \leq \ell$, one has
$$\tor_{>0}^R \Big( \frac{R}{I} , \frac{R}{J_{i_1} + \cdots + J_{i_k}} \Big) = 0.$$
\end{definition}

Conceptually, Tor-independent objects do not have any nontrivial homological interaction; at the level of resolutions, this means that tensoring a resolution of a module $M$ with a Tor-independent quotient ring preserves exactness.

The next theorem gives a partial characterization of anchored free flags, and is the main result of this section. The abridged version of this result states that if the homology $H(D)$ is sufficiently Tor-independent with respect to a subfamily of the ideals $\im (f_i \colon \bigwedge^i E \to R)$, then differential modules with complete intersection homology must arise as in Proposition \ref{prop:genKoszul}.

\begin{theorem}\label{thm:nonCHar2}
Let $D$ be a differential $R$-module with $H(D)$ a complete intersection. Let $F\to D$ be an anchored free flag resolution and assume that $\im (f_1 \colon E \to R)$ is completely Tor-independent with respect to the set
$$\Big\{ \im (f_i \colon \bigwedge^i E \to R) \mid i \ \textrm{is even and} \ i \leq \rank(E)/2 \Big\}.$$
Then $F$ is isomorphic to the differential module of Proposition \ref{prop:genKoszul}, where $T = \bigwedge^\bullet E$.
\end{theorem}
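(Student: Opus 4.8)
The goal is to upgrade the abstract "Tor-independence" hypothesis into the concrete statement that the structure maps $A_{i,j}^F$ of the anchored free flag $F$ all come from multiplication by a single family of elements $f_{i-j}\in \bigwedge^{i-j}E^*$. Since $H(D)=R/\mfa$ is a complete intersection, its minimal free resolution is the Koszul complex $K_\bullet$ on a regular sequence generating $\mfa$; thus $F$ has underlying module $\bigwedge^\bullet E$ (with $E$ free of rank $=\mu(\mfa)$) and $A_{i,i-1}^F$ equal to the Koszul differential, i.e. multiplication by a fixed $f_1\in E^*$. The plan is to induct on the "diagonal distance" $\delta=i-j$, proving that for each $\delta$ there is an element $f_\delta\in \bigwedge^\delta E^*$ with $A_{i,i-\delta}^F = (-1)^{i(i-\delta)}f_\delta$ (multiplication), and that the $f_\delta$ satisfy the relations of Proposition \ref{prop:genKoszul}. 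The base case $\delta=1$ is exactly the anchored condition.

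\textbf{The inductive step.} Fix $\delta\ge 2$ and suppose $A_{i,i-e}^F$ is multiplication by $f_e$ for all $e<\delta$. The square-zero relation of Observation \ref{obs:DMsCoordFree} applied to the pair $(i,i-\delta)$ gives
\[
\sum_{0<e<\delta} A_{i-e,\,i-\delta}^F\,A_{i,\,i-e}^F \;+\; \text{(the }\delta\text{-term is absent)}\;=\; -\,d_1\,A_{i,1}^F\cdots
\]
— more precisely, $\sum_{i-\delta<k<i} A_{k,i-\delta}^F A_{i,k}^F = 0$; isolating $A_{i,i-\delta}^F d_{\text{something}}$ is not how it works, so instead the right bookkeeping is: consider the relation for $(i, 0)$, i.e. $\sum_{0<k<i} A_{k,0}^F A_{i,k}^F = 0$, and use that by the inductive hypothesis every $A_{k,0}^F$ with $0<k<i$ and $k$ "close to the top or bottom" is already a known multiplication map. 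The key reduction, exactly as in the paragraph following the counterexample, is to show that $A_{i,0}^F$ (the top-row entry at distance $i$) is forced: the relation reads $A_{i,0}^F A_{i,i}^F + (\text{lower terms}) = 0$, and because $\im f_1$ is Tor-independent with respect to the ideals $\im f_e$ ($e$ even, $e\le \rank(E)/2$) appearing in those lower terms, the "cancellation phenomenon" of the last example cannot occur — the only way the lower-term sum can be killed is entry-by-entry, which (by perfectness of the exterior pairing $\bigwedge^i E\otimes\bigwedge^{n-i}E\to\bigwedge^n E$, as recalled before Example \ref{ex:koszul}) pins $A_{i,0}^F$ down to multiplication by a single $f_i\in\bigwedge^i E^*$ satisfying the quadratic relations $f_e f_{e'}=0$ for $e,e'$ even. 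Once the top-row entries are handled, Lemma \ref{lem:MayaLem}/Theorem \ref{cor:CIequiv} together with a diagonal-cascading argument (the same one used in the proof of Lemma \ref{lem:theConverse}) propagates the conclusion to every $A_{i,i-\delta}^F$, after an isomorphism of $F$; and the sign $(-1)^{i(i-\delta)}$ is exactly what makes the square-zero relations coincide with those verified in Proposition \ref{prop:genKoszul}.

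\textbf{Assembling the isomorphism.} After the induction we have produced elements $f_1,\dots,f_n\in\bigwedge^\bullet E^*$ (with $n=\rank E$) satisfying the hypotheses of Proposition \ref{prop:genKoszul} with $T=\bigwedge^\bullet E$, and an isomorphism of differential modules from $F$ to the flag whose structure maps are $(-1)^{ij}f_{i-j}$ — which is precisely $K(f_1,\dots,f_n)$, a specialization of $K^{\text{gen}}$ via the ring map $S\to R$ sending the generic $x_I$ to the coefficients of the $f_i$'s. One must also check this really lands in $\dm(R)$, i.e. that the $f_i$ do satisfy $f_ef_{e'}=0$ for $e,e'$ even: this is forced by the Tor-independence exactly where it is invoked above, so nothing extra is needed.

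\textbf{Main obstacle.} The crux is the claim that Tor-independence of $\im f_1$ against the even-degree ideals $\im f_i$ rules out the accidental cancellation exhibited in the last example, and thereby forces $A_{i,0}^F$ to be a genuine multiplication map rather than merely agreeing with one modulo $\mfa$. Making this precise requires translating "$\tor^R_{>0}(R/I, R/(J_{i_1}+\cdots+J_{i_k}))=0$" into the statement that tensoring the relevant Koszul-type resolutions stays exact (the interpretation flagged right after the definition of completely Tor-independent), and then arguing that a syzygy among the lower-distance terms which holds modulo $\mfa$ must already hold on the nose — i.e. that the relation $A_{i,0}^F\cdot(\text{last Koszul map}) = -\sum(\text{lower terms})$ has, up to the freedom quantified by Lemma \ref{lem:MayaLem}, a unique solution expressible as multiplication by an exterior-algebra element. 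I expect this to be the technically delicate heart of the argument; everything else (the sign bookkeeping, the cascading via Lemma \ref{lem:MayaLem}, recognizing the result as a specialization of $K^{\text{gen}}$) is routine once this is in place.
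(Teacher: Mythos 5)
Your overall strategy (induct on the diagonal distance, base case the anchored condition, normalize entries by an isomorphism/column operations, and recognize that the crux is turning a relation that holds only modulo lower even-degree terms into an on-the-nose statement) does match the shape of the paper's argument. But the proposal has a genuine gap: the step you flag as the ``technically delicate heart'' is precisely the content of the paper's proof, and you defer it rather than prove it. Concretely, for the entries $A_{i,j+1}$ with $j+1\geq 1$ the square-zero relation, combined with the inductive hypothesis, only yields $f_1\cdot b \in \mfa$, where $b$ is the discrepancy between $A_{i,j+1}$ and $\pm f_{i-j-1}$ and $\mfa$ is generated by the images of the even-degree $f_\ell$ with $\ell < i-j-1$ (plus a term $2\sum f_{k-j}f_{i-k}$ over even pairs). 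The paper's argument then (i) uses the Tor-independence hypothesis to conclude that the Koszul complex on $f_1$ stays exact after tensoring with $R/\mfa$, (ii) multiplies by each dual basis vector $e_I^*$ to see that $b\cdot e_I^*$ is a cycle mod $\mfa$, hence a boundary mod $\mfa E$, and (iii) reassembles these via the trace element $\sum_J e_J^*\otimes e_J$ to conclude $b \in \im f_1 + \sum_{\ell\ \mathrm{even}} \im f_\ell$, which is exactly what licenses the column operations. None of this is carried out in your proposal, and without it the theorem is not proved.

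There are also two concrete misdirections. First, you spend the inductive step trying to ``pin down'' the top-row map $A_{i,0}$, but nothing needs to be forced there: any map $\bigwedge^i E \to R$ is automatically multiplication by an element of $\bigwedge^i E^*$ because the exterior pairing is perfect (the paper notes this before Example \ref{ex:koszul}); the actual content is that the entries $A_{i,j}$ with $j\geq 1$ on the same diagonal can be normalized to multiplication by that \emph{same} element. Second, the proposed propagation via Lemma \ref{lem:MayaLem} and Theorem \ref{cor:CIequiv} does not apply here: those results operate under the hypothesis $\im A_{i,0}\subseteq \im d_1$ and cancel whole diagonals to zero, i.e.\ they detect the case $F\cong\fold(K_\bullet)$, which is exactly the degenerate situation the Koszul differential module is meant to go beyond. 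The correct cascading is the double induction in the paper (on $i-j-1$, and within a diagonal on $j$), driven at each step by the Tor-independence argument above, not by Lemma \ref{lem:MayaLem}. Finally, your remark that the relations $f_ef_{e'}=0$ for $e,e'$ even are ``forced by Tor-independence'' is asserted rather than argued; as written, the square-zero identities only control the relevant sums, so this point also needs the bookkeeping of the paper's computation.
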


\begin{proof}
Proceed by induction on $i-j-1$. When $i-j-1 = 0$, this is the statement of Theorem \ref{thm:danMikeThm}. Assume now that $i-j-1 \geq 1$. Inducting also on $j$, one may assume that for all $k>0$, there is the equality $A_{i-k,j+1-k} = (-1)^{(i-k)(j-k)} f_{i-j-1}$ (the base case is for $k=j+1$, which holds by assumption). Using this, one computes:
\begingroup\allowdisplaybreaks
\begin{align*}
    0 &=\sum_{k=j+1}^{i-1} A_{k,j} A_{i,k}  \\
    &= 2 \cdot  \sum_{\substack{i-k, \ j-k \ \textrm{even} \\
    j<k \leq \lfloor (i+j)/2 \rfloor}} f_{k-j}\cdot f_{i-k} + f_1 \big( (-1)^{(i+1)j+i+j} f_{i-j-1} - A_{i,j+1} \big).
\end{align*}
\endgroup
If $i-j > \rank E$, then the above equality reduces to $f_1 \big( (-1)^{(i+1)j+i+j} f_{i-j-1} - A_{i,j+1} \big) = 0$, and exactness of multiplication by $f_1$ implies that $(-1)^{(i+1)j+i+j} f_{i-j-1} - A_{i,j+1} \in \im f_1$. If $i-j \leq \rank E$, then notice that for each $j < k \leq \lfloor (i+j)/2 \rfloor$, one has $i-k$ or $k-j \leq \rank(E)/2$. Define the ideal
$$\mfa := \big( \im (f_\ell \colon \bigwedge^{\ell} E \to R ) \mid \ell \ \textrm{is even and} \ \ell < i-j-1 \big) \subset R.$$
Let $I$ be any indexing set with $|I|=i-j-2$ and let $e_I^* \in \bigwedge^{i-j-2} E^*$ denote the basis element dual to $e_I \in \bigwedge^{i-j-2} E$. Multiplying the above equality on the right by $e_I^*$, one obtains
\begin{equation}\label{eqn:0Moduloa}
f_1 \cdot (b \cdot e_I^*) \in \mfa,
\end{equation}
where $b := (-1)^{(i+1)j+i+j} f_{i-j-1} - A_{i,j+1}$. By the Tor-independence assumption, notice that if $K_\bullet$ denotes the Koszul complex induced by $f_1$, then the complex $K_\bullet \otimes_R R/\mfa$ must remain exact. Combining this with \ref{eqn:0Moduloa} implies that $b \cdot e_I^*$ is a cycle in $K_\bullet \otimes_R R/\mfa$, and hence by exactness there exists some $a \in \bigwedge^2 E$ such that 
$$b\cdot e_I^* - f_1 \cdot a \in \mfa E.$$
Multiplying the above on the right by $e_i^*$ for $i=1 , \dots , n$, it follows that for all indexing sets $J$ of size $i-j-1$, there exist elements $a_\ell^J \in \bigwedge^{\ell} E$ such that
\begin{equation}\label{eqn:boundaryElements}
    b \cdot e_J^* = f_1 \cdot a_1^J + \sum_{\substack{\ell \ \textrm{even}, \\
\ell < i-j-1}} f_\ell \cdot a_\ell^J.
\end{equation}
Recall that the identity map $\bigwedge^{i-j-1} E \to \bigwedge^{i-j-1} E$ is equivalently represented as right multiplication by the trace element $\sum_{|J| = i-j-1} e_J^* \otimes e_J$, whence:
\begingroup\allowdisplaybreaks
\begin{align*}
    b &= b \cdot \Big( \sum_{|J| = i-j-1} e_J^* \otimes e_J \Big) \\
    &= \sum_{|J| = i-j-1} b e_J^* \otimes e_J \\
    &= \sum_{|J| = i-j-1} (f_1 \cdot a_1^J)e_J + \sum_{|J| = i-j-1} \sum_{\substack{\ell \ \textrm{even}, \\
\ell < i-j-1}} (f_\ell \cdot a_\ell^J) e_J \quad \textrm{(by \ref{eqn:boundaryElements})} \\
&= f_1 \cdot \Big( \sum_{|J| = i-j-1}  a_1^J \cdot e_J \Big) +  \sum_{\substack{\ell \ \textrm{even}, \\
\ell < i-j-1}} f_\ell \cdot \Big( \sum_{|J| = i-j-1}  a_\ell^J\cdot e_J \Big) \\
&\in \im \Big( f_1 \colon \bigwedge^{i-j} E \to \bigwedge^{i-j-1} E \Big) + \sum_{\substack{\ell \ \textrm{even}, \\
\ell < i-j-1}} \im \Big( f_\ell \colon \bigwedge^{i-j-1+\ell} E \to \bigwedge^{i-j-1} E \Big).
\end{align*}
\endgroup
The above computation thus shows that one may perform column operations on the matrix representation of the square-zero endomorphism of $D$ to ensure that  $A_{i,j+1} = (-1)^{(i+1)j+i+j} f_{i-j-1}$. This completes the proof.
\end{proof}

Although the hypotheses of Theorem \ref{thm:nonCHar2} are stated in a decent level of generality, the following corollary makes explicit a list of common cases for which the hypotheses of Theorem \ref{thm:nonCHar2} are satisfied:

\begin{cor}
The assumptions of Theorem \ref{thm:nonCHar2} are satisfied in the following cases:
\begin{enumerate}
    \item The free module $E$ satisfies $\rank E \leq 3$.
    \item $(R, \m)$ is a Noetherian local ring and
    $$\im \Big( f_1 \colon E \to R \Big) + \sum_{\substack{\ell \leq \rank(E)/2 \\
    \ell \ \textrm{even}}} \im \Big( f_\ell \colon \bigwedge^\ell E \to R \Big)$$
    is generated by a regular sequence contained in $\m$.
    \item $R$ is a graded ring and
    $$\im \Big( f_1 \colon E \to R \Big) + \sum_{\substack{\ell \leq \rank(E)/2 \\
    \ell \ \textrm{even}}} \im \Big( f_\ell \colon \bigwedge^\ell E \to R \Big)$$
    is generated by a homogeneous regular sequence of positive degree.
    \item $R = k[x_1 , \dots , x_n]$ and the image of each $f_i \colon \bigwedge^i E \to R$ for $i=1$ and $i \leq \rank(E)/2$ even lie in polynomial rings in disjoint variables. 
\end{enumerate}
In particular, if either:
\begin{enumerate}
    \item $(R,\m)$ is a Noetherian local ring and the first row of the matrix representation of the square-zero endomorphism of $D$ generates a complete intersection contained in $\m$, or
    \item $R$ is a graded ring and the first row of the matrix representation of the square-zero endomorphism of $D$ generates a homogeneous complete intersection of positive degree,
\end{enumerate}
then the assumptions of Theorem \ref{thm:nonCHar2} are satisfied.
\end{cor}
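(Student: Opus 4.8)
The plan is to first translate the hypothesis of Theorem~\ref{thm:nonCHar2} into a statement about regular sequences, and then verify it in each listed situation. Because $F\to D$ is an anchored free flag resolution of a differential module with complete intersection homology, the flag-graded complex underlying $F$ is the minimal free resolution of $H(D)\cong R/\im(f_1)$; in particular the entries of $f_1\colon E\to R$ form a regular sequence, so the Koszul complex $K_\bullet:=K(f_1)$ resolves $R/\im(f_1)$. Hence for every ideal $\mathfrak b$ one has $\tor_{>0}^R(R/\im(f_1),R/\mathfrak b)=H_{>0}(K_\bullet\otimes_R R/\mathfrak b)$, which vanishes exactly when the entries of $f_1$ remain a regular sequence modulo $\mathfrak b$. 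So the goal reduces to: the regular sequence generating $\im(f_1)$ stays regular modulo $\sum_{i\in S}\im(f_i)$ for every subset $S$ of $\{i\text{ even},\ i\leq\rank(E)/2\}$.

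Two of the cases are then essentially immediate. In case (1), $\rank E\leq 3$ forces that indexing set to be empty, so only $S=\varnothing$ occurs and $\tor_{>0}^R(R/\im(f_1),R)=0$. In case (4) (and for the generic module of Construction~\ref{cons:genericKoszul}), $\im(f_1)$ lies in a polynomial subring on variables disjoint from those involving the remaining $f_i$ with $i\in S$ (note $1\notin S$); the Koszul resolution of $R/\im(f_1)$ is then obtained by flat base change from that subring, over which $R/\sum_{i\in S}\im(f_i)$ is free, so tensoring preserves exactness and the Tor vanishes.

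The substantive cases are (2) and (3). Writing $\mathfrak A:=\im(f_1)+\sum_{\ell}\im(f_\ell)$, which by hypothesis is generated by a regular sequence (homogeneous of positive degree in the graded case), I would argue that $\im(f_1)$ and each $\sum_{i\in S}\im(f_i)$ are cut out by complementary subsets of one regular sequence generating $\mathfrak A$: a minimal generating set of a complete intersection ideal is automatically a regular sequence, so a sub-ideal of $\mathfrak A$ generated by part of a minimal generating set of $\mathfrak A$ is again a complete intersection cut out by an initial segment of a regular sequence, and a minimal-generator count via Nakayama shows that the generators contributed by $\im(f_1)$ and by the $\im(f_i)$ are non-redundant inside $\mathfrak A$. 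Reordering the resulting regular sequence then shows the entries of $f_1$ are regular modulo $\sum_{i\in S}\im(f_i)$, and the reduction above finishes the argument. For the ``in particular'' assertions, the first row of the block matrix of the square-zero endomorphism is $(0,A_{1,0},A_{2,0},\dots)=(0,f_1,f_2,\dots)$, so the stated hypothesis says $\sum_{i\geq 1}\im(f_i)$ is a complete intersection in $\m$ (resp.\ a homogeneous complete intersection of positive degree); since $\im(f_1)+\sum_{\ell\text{ even},\ \ell\leq\rank(E)/2}\im(f_\ell)$ is a sub-ideal generated by part of a minimal generating set of it, it is again a complete intersection, placing us in case (2) (resp.\ (3)).

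The main obstacle is the non-redundancy claim in cases (2) and (3): that the generators of $\im(f_1)$ and of the various $\im(f_i)$ stay linearly independent modulo $\m\mathfrak A$, so that $\im(f_1)$ and $\sum_{i\in S}\im(f_i)$ meet transversally inside $\mathfrak A$ and are cut out by disjoint parts of a single regular sequence. This is the one step that genuinely exploits the complete intersection hypothesis rather than mere exactness of the Koszul complex on $f_1$, and it is where the care in the proof has to be spent; everything else is either vacuous (case (1)) or a flat base change (case (4)).
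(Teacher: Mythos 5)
Your reduction of the Tor-independence hypothesis to ``the entries of $f_1$ remain a regular sequence modulo $J_S:=\sum_{i\in S}\im f_i$,'' the vacuous case (1), and the flat base change argument for case (4) are all fine. The genuine gap is in cases (2) and (3), exactly at the step you flag as the main obstacle: the claim that the generators contributed by $\im f_1$ and by the $\im f_i$ are non-redundant in $\mathfrak{A}$, so that $\im f_1$ and $J_S$ are cut out by complementary pieces of a single regular sequence. Under the reading you adopt---that the \emph{ideal} $\mathfrak{A}$ is merely generated by some regular sequence---this claim is not just unproven but false, and indeed the Tor-vanishing itself can fail. Take $R=k[x_1,\dots,x_4]_{(x_1,\dots,x_4)}$, $\mathfrak{a}=(x_1,\dots,x_4)$, $E$ of rank $4$, and build an anchored free flag on $\bigwedge^\bullet E$ with $A_{i,i-1}$ the Koszul differentials of $\mathfrak{a}$, $A_{2,0}=x_1e_{12}^*$, $A_{3,0}=A_{4,0}=0$, and $A_{3,1},A_{4,2},A_{4,1}$ obtained by successively lifting $d_1A_{3,1}=-A_{2,0}d_3$, $d_2A_{4,2}=-A_{3,1}d_4$, $d_1A_{4,1}=-A_{2,0}A_{4,2}$ through the exact Koszul complex (each lift exists because the relevant images land in $\im d_1$, resp.\ $\ker d_1=\im d_2$). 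This is a legitimate anchored free flag with $H(F)\cong R/\mathfrak{a}$ a complete intersection; the only relevant even index is $\ell=2$, and $\im f_1+\im f_2=\mathfrak{a}$ is generated by a regular sequence contained in $\m$, so your hypothesis holds. Yet $\tor_1^R(R/\im f_1,R/\im f_2)=\tor_1^R(R/\mathfrak{a},R/(x_1))\cong R/\mathfrak{a}\neq 0$ since $x_1\in\mathfrak{a}$. Here $\im f_2\subseteq\im f_1$, the generators are maximally redundant, and no Nakayama count can produce the complementary-subsets decomposition; so your plan for (2)--(3) cannot be completed as stated.

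The statement is provable only under the stronger reading that the listed entries themselves---the entries of $f_1$ together with those of the $f_\ell$ for $\ell$ even, $\ell\leq\rank(E)/2$ (resp.\ the entries of the entire first row, for the ``in particular'' clause)---form a regular sequence contained in $\m$, resp.\ a homogeneous regular sequence of positive degree, and this is evidently the intended content. Under that reading the argument is much shorter than what you propose and needs none of the minimal-generation bookkeeping: in the local or positively graded setting regular sequences are permutable, so for any subset $S$ one may move the generators of $J_S$ to the front of the sequence; the entries of $f_1$ then remain a regular sequence on $R/J_S$, hence the Koszul complex on $f_1$ (which resolves $R/\im f_1$) stays exact after tensoring with $R/J_S$, which is precisely the required vanishing of $\tor_{>0}^R(R/\im f_1,R/J_S)$. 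The same correction applies to your ``in particular'' step: ``generated by part of a minimal generating set'' is unjustified as written, since the first-row entries need not belong to any minimal generating set of the first-row ideal; under the regular-sequence reading the deduction is immediate because any subset of a regular sequence is, after reordering, again a regular sequence.
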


Interestingly, if we assume that $R$ is has characteristic $2$, then the statement of Theorem \ref{thm:nonCHar2} can be generalized significantly.

\begin{theorem}
Assume $R$ is a ring of characteristic $2$. Let $F$ be an anchored free flag with $H(F)$ a complete intersection. Then $F$ isomorphic to the differential module of Proposition \ref{prop:genKoszul} for some choice of $f_i \in \bigwedge^i E^*$, where $T = \bigwedge^\bullet E$.
\end{theorem}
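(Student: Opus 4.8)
The plan is to re-run the induction in the proof of Theorem~\ref{thm:nonCHar2}, observing that in characteristic $2$ the obstruction term that forced the Tor-independence hypothesis there vanishes identically, so that no hypothesis beyond ``anchored free flag with complete intersection homology'' is needed. Write $H(F) = R/\mfa$ with $\mfa$ generated by a regular sequence. The minimal free resolution of $H(F)$ is then the Koszul complex on that regular sequence, so by Theorem~\ref{thm:danMikeThm} the underlying module of $F$ is $\bigwedge^\bullet E$ for a free module $E$ of rank equal to the length of the sequence, and the first off-diagonal blocks $A_{i,i-1}$ are the Koszul differentials, i.e.\ left multiplication by the element $f_1 := A_{1,0}\in E^*$ whose coordinates are this regular sequence. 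In particular the Koszul complex $K_\bullet(f_1)$ is exact in positive homological degrees; this is the only exactness input the argument requires.

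First I would set up the nested induction exactly as in Theorem~\ref{thm:nonCHar2}: an outer induction on the diagonal $d = i-j-1$, with base case $d=0$ furnished by Theorem~\ref{thm:danMikeThm}, and, for each fixed $d$, an inner induction on the lower index whose base case consists of \emph{defining} $f_d\in\bigwedge^d E^*$ to be (a sign times) the corner map $A_{d,0}\colon\bigwedge^d E\to R$ regarded as an element of $(\bigwedge^d E)^*\cong\bigwedge^d E^*$. For the inductive step, substitute the inductive hypothesis into the structural relation $\sum_{j<k<i}A_{k,j}A_{i,k}=0$ of Observation~\ref{obs:DMsCoordFree} and group the summands in conjugate pairs $k\leftrightarrow i+j-k$ (with the self-conjugate middle term, when $i+j$ is even, handled via divided powers) exactly as in the general proof. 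This rewrites the relation as
\[
0 = 2\sum_{\substack{i-k,\ j-k\ \text{even}\\ j<k\leq\lfloor(i+j)/2\rfloor}} f_{k-j}\cdot f_{i-k} + f_1\cdot b,
\]
where $b := f_{i-j-1} - A_{i,j+1}$, all signs being trivial since $\Char R = 2$ and the bracketing identifying $A_{i,j+1}$ with a map $\bigwedge^i E\to\bigwedge^{j+1}E$ via the perfect pairing $\bigwedge^k E\otimes\bigwedge^{n-k}E\to\bigwedge^n E$ (equivalently, via the trace element $\sum_{|J|=i-j-1}e_J^*\otimes e_J$), just as before. The crucial point is that in characteristic $2$ the first sum is $0$ on the nose --- with no need to know that any individual product $f_{k-j}\cdot f_{i-k}$ vanishes --- so one obtains $f_1\cdot b = 0$ directly. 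This is precisely where the general argument had to invoke Tor-independence to pass from ``$f_1\cdot b$ lies in an auxiliary ideal'' to ``$b\in\im f_1$''; here that detour is simply absent. Exactness of $K_\bullet(f_1)$ in degree $i-j-1\geq 1$ then gives $b\in\im\big(f_1\colon\bigwedge^{i-j}E\to\bigwedge^{i-j-1}E\big)$, and performing the corresponding column operations on the block matrix of $d^F$ --- the manipulation packaged in Lemma~\ref{lem:MayaLem} and used to close the proof of Theorem~\ref{thm:nonCHar2} --- replaces $F$ by an isomorphic free flag with $A_{i,j+1}=f_{i-j-1}$. Iterating over all diagonals yields $F\cong K(f_1,\dots,f_n)$, and case~(1) of Proposition~\ref{prop:genKoszul} guarantees that this $K(f_1,\dots,f_n)$ really is a differential module, since in characteristic $2$ no relations among the $f_i$ are imposed.

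The step requiring the most care is, as in the general proof, bookkeeping rather than anything conceptual: the maps $A_{i,j}$ of an anchored free flag are a priori arbitrary module maps $\bigwedge^i E\to\bigwedge^j E$, whereas the $f_\ell$ act by \emph{left multiplication}, so each use of the structural relation must be pushed through the duality $(\bigwedge^k E)^*\cong\bigwedge^{n-k}E$ (up to the top exterior power) before the exactness of $K_\bullet(f_1)$ can be applied --- this is the role of the trace element, and it is identical to what is done in Theorem~\ref{thm:nonCHar2}. One should also dispatch the degenerate ranges separately: when $i-j-1>\rank E$ the exterior power $\bigwedge^{i-j-1}E$ vanishes and there is nothing to prove, and when $i-j-1=\rank E$ exactness forces $b=0$ outright rather than merely $b\in\im f_1$. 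Since at no point do the characteristic $2$ simplifications reintroduce a sign, a cross term, or a constraint on the $f_i$, the resulting proof is strictly shorter than that of Theorem~\ref{thm:nonCHar2}; morally, its entire content is that in characteristic $2$ the cancellation phenomenon exhibited by the characteristic-$\neq 2$ example preceding Theorem~\ref{thm:nonCHar2} can never occur.
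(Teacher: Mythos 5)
Your proposal is correct and follows the same route as the paper: the paper's proof of this theorem is literally ``do the computation of the proof of Theorem \ref{thm:nonCHar2} and note that the extraneous terms cancel by the characteristic assumption,'' which is exactly your observation that the terms carrying the coefficient $2$ vanish in characteristic $2$, so $f_1\cdot b=0$ outright and exactness of the Koszul complex on $f_1$ replaces the Tor-independence detour. Your write-up simply spells out the bookkeeping that the paper leaves implicit.
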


\begin{proof}
Do the computation of the previous proof, but notice that all other extraneous terms cancel by the characteristic assumption.
\end{proof}


\section{DG-module Structures on Free Flags}\label{sec:DGAlg}

In this section, we study differential modules that can be given the structure of a DG-module over some DG-algebra. Our main motivation for considering this question is based on the philosophy that the homological properties of a differential module are tightly linked to those of the homology, as suggested by Theorem \ref{thm:danMikeThm}. One natural direction related to this question is the extent to which additional structure on the minimal free resolution of the homology can be ``lifted" to the differential module. Our results here indicate that there are some very restrictive obstructions to lifting algebra structures to the level of differential modules.

It is evident that if a free resolution $F_\bullet$ of the homology $H(D)$ of an anchored free flag $D$ admits the structure of an associative DG-algebra structure and $D \cong \fold (F_\bullet)$, then the algebra structure on $F_\bullet$ can be transferred to a DG-module structure on $D$. We prove even further that if the homology $H(D)$ is a complete intersection, then this becomes an equivalence; more precisely: an anchored free flag $D$ with $H(D)$ a complete intersection admits the structure of a DG-module over $K_\bullet$ \emph{if and only if} $D \cong \fold (K_\bullet)$, where $K_\bullet$ denotes the Koszul complex resolving $H(D)$. 

We conclude the section with questions about DG-module structures on more general free flag resolutions. In particular, we know of no example of a free flag admitting a DG-module structure over the minimal free resolution of its homology that is \emph{not} isomorphic to the fold of some complex, and are very interested in any such example. 


\begin{definition}\label{def:dga}
A \emph{differential graded algebra} $(F,d)$ (or \emph{DG-algebra}) over a commutative Noetherian ring $R$ is a complex of finitely generated free $R$-modules with differential $d$ and with a unitary, associative multiplication $F \otimes_R F \to F$ satisfying
\begin{enumerate}[(a)]
    \item $F_i F_j \subseteq F_{i+j}$,
    \item $d_{i+j} (f_i f_j) = d_i (f_i) f_j + (-1)^i f_i d_j (f_j)$,
    \item $f_i f_j = (-1)^{ij} f_j f_i$, and
    \item $f_i^2 = 0$ if $i$ is odd,
\end{enumerate}
where $f_k \in F_k$.
\end{definition}

There does not exist a tensor product between arbitrary differential modules (or even free flags) that directly generalizes the tensor product of complexes, but, it is possible to construct such a product between a \emph{complex} and a differential module.

\begin{definition}
Let $F_\bullet$ be a complex and $D$ a differential module. Then the \emph{box product} $F_\bullet \boxtimes_R D$ is defined to be the differential module with underlying module $\bigoplus_{i\in \bbz} F_i \otimes_R D$ and differential
$$d^{F \boxtimes D} (f_i \otimes d) := d^F (f_i) \otimes d + (-1)^i f_i \otimes d^D (d).$$
\end{definition}

\begin{definition}\label{def:DGmod}
Let $D$ be a differential module and let $F_\bullet$ be a minimal free resolution of $H(D)$ admitting the structure of a DG-algebra. Then $D$ is a DG-module over $F_\bullet$ if there exists a morphism of differential modules
$$p \colon F_\bullet \boxtimes_R D \to D.$$
In such a case, the notation $f_i \cdot_D d := p(f_i \otimes d)$ will be used. 
\end{definition}

\begin{remark}
Sometimes the simpler notation $\cdot$ will be used over $\cdot_D$ when it is clear which product is being considered. It is important to note that there is almost no hope for an appropriate generalization of a DG-algebra even for general free flag resolutions. This is for at least two reasons: firstly, as already mentioned, there is no natural candidate for the tensor product of two differential modules, so one cannot employ a definition similar to Definition \ref{def:DGmod}. Secondly, the ``degree" of an element is not well-defined if it is induced by the flag grading of arbitrary free flag $D = \bigoplus_{i \in \bbz} F_i$, since any given $f_i \in F_i$ is contained in $D^j$ for all $j \geq i$.
\end{remark}



\begin{obs}\label{obs:foldOfDG}
If $F_\bullet$ is a complex admitting the structure of a DG-algebra, then $\fold (F_\bullet)$ is a DG-module over $F_\bullet$.
\end{obs}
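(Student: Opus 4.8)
The plan is to exhibit the obvious candidate for the structure map and verify that it is a morphism of differential modules. Write $D = \fold (F_\bullet) = \bigoplus_{j \in \bbz} F_j$ with differential $d^D = \bigoplus_{j} d^F_j$. By definition the box product $F_\bullet \boxtimes_R D$ has underlying module $\bigoplus_{i \in \bbz} F_i \otimes_R D = \bigoplus_{i,j} F_i \otimes_R F_j$, and the DG-algebra multiplication $F \otimes_R F \to F$ restricts, by axiom (a) of Definition \ref{def:dga}, to $R$-linear maps $\mu_{i,j} \colon F_i \otimes_R F_j \to F_{i+j}$. I would define $p \colon F_\bullet \boxtimes_R D \to D$ to be the map whose restriction to each summand $F_i \otimes_R F_j$ is $\mu_{i,j}$; that is, $p$ is simply the ``fold'' of the DG-algebra multiplication, and it is automatically $R$-linear.

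The only thing to check is that $p$ intertwines the differentials, i.e. that $d^D \circ p = p \circ d^{F \boxtimes D}$ in the sense of Definition \ref{def:diffMod}. By additivity it suffices to test this on a homogeneous tensor $f_i \otimes f_j$ with $f_i \in F_i$ and $f_j \in F_j$. On one side, $d^D(p(f_i \otimes f_j)) = d^F_{i+j}(f_i f_j)$. On the other side, unwinding the box-product differential gives $p\big(d^F_i(f_i) \otimes f_j + (-1)^i f_i \otimes d^F_j(f_j)\big) = d^F_i(f_i)\, f_j + (-1)^i f_i\, d^F_j(f_j)$. These two expressions agree precisely by the Leibniz rule, axiom (b) of Definition \ref{def:dga}. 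Hence $p$ is a morphism of differential modules, so $\fold(F_\bullet)$ is a DG-module over $F_\bullet$ in the sense of Definition \ref{def:DGmod}; if one wants the full module-theoretic compatibilities (unitality and associativity of the action), these are inherited verbatim from the corresponding DG-algebra axioms for the multiplication on $F_\bullet$.

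I do not expect a genuine obstacle here: the statement is essentially a bookkeeping exercise. The one point meriting a little care is that the fold collapses the homological grading of $F_\bullet$, so one should note that the sign $(-1)^i$ in the box-product differential is governed by the homological degree $i$ of the first tensor factor (a grading that is retained in $F_\bullet \boxtimes_R D$ even though $D$ itself is ungraded), and that this is exactly the degree occurring in the Leibniz rule, so the signs match with no further adjustment.
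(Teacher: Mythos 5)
Your proposal is correct and is essentially the paper's argument: the paper simply says to define the product on $\fold(F_\bullet)$ via the product on $F_\bullet$, and your verification that the folded multiplication map intertwines the box-product differential with $d^{\fold(F_\bullet)}$ (via the Leibniz rule) is exactly the routine check the paper leaves implicit.
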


\begin{proof}
Just define the product on $\fold (F_\bullet)$ via the product on $F_\bullet$.
\end{proof}

\begin{prop}\label{prop:indAlg}
Let $\phi \colon D \to D'$ be an isomorphism of differential modules and $F_\bullet$ a DG-algebra minimal free resolution of $H(D)$. If $D'$ is a DG-module over $F_\bullet$, then $D$ is a DG-module over $F_\bullet$ with the induced product:
$$f_i \cdot_D d := \phi^{-1} \big( f_i \cdot_{D'} \phi(d) \big).$$
Moreover, $\phi$ becomes a morphism of DG-modules with this product.
\end{prop}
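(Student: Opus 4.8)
The statement has two parts: first, that the formula $f_i \cdot_D d := \phi^{-1}(f_i \cdot_{D'} \phi(d))$ defines a DG-module structure on $D$ over $F_\bullet$; second, that $\phi$ intertwines the two DG-module structures. Both are essentially formal, obtained by transporting structure along the isomorphism $\phi$, so the plan is simply to verify that the relevant diagrams commute. The main (and only mild) obstacle is bookkeeping: one must check that the prescribed formula really is a \emph{morphism of differential modules} $F_\bullet \boxtimes_R D \to D$ in the sense of Definition \ref{def:DGmod}, i.e. that it commutes with the box-product differential, and for this one has to track the signs $(-1)^i$ appearing in $d^{F \boxtimes D}$.

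First I would set up the map. Define $p_D \colon F_\bullet \boxtimes_R D \to D$ on generators by $p_D(f_i \otimes d) = \phi^{-1}(p_{D'}(f_i \otimes \phi(d)))$, where $p_{D'} \colon F_\bullet \boxtimes_R D' \to D'$ is the given DG-module structure on $D'$. Equivalently, $p_D = \phi^{-1} \circ p_{D'} \circ (\id_{F_\bullet} \boxtimes \phi)$, where $\id_{F_\bullet} \boxtimes \phi \colon F_\bullet \boxtimes_R D \to F_\bullet \boxtimes_R D'$ is the map induced in each summand by $f_i \otimes d \mapsto f_i \otimes \phi(d)$. The key point is that $\id_{F_\bullet}\boxtimes\phi$ is itself a morphism of differential modules: since $\phi$ commutes with $d^D$ and $d^{D'}$, one has
\[
(\id \boxtimes \phi)\big(d^{F\boxtimes D}(f_i\otimes d)\big) = d^F(f_i)\otimes\phi(d) + (-1)^i f_i\otimes \phi(d^D(d)) = d^{F\boxtimes D'}\big(f_i\otimes\phi(d)\big),
\]
using $\phi(d^D(d)) = d^{D'}(\phi(d))$. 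Hence $p_D$, being a composite of three morphisms of differential modules ($\phi^{-1}$, $p_{D'}$, and $\id\boxtimes\phi$), is a morphism of differential modules, so it defines a DG-module structure on $D$ and the formula $f_i\cdot_D d = \phi^{-1}(f_i\cdot_{D'}\phi(d))$ holds by construction.

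For the second assertion, that $\phi$ is a morphism of DG-modules, one must check compatibility with the products, i.e. $\phi(f_i \cdot_D d) = f_i \cdot_{D'} \phi(d)$ for all $f_i \in F_i$ and $d \in D$. But this is immediate from the definition: $\phi(f_i \cdot_D d) = \phi\big(\phi^{-1}(f_i \cdot_{D'} \phi(d))\big) = f_i \cdot_{D'} \phi(d)$. Since $\phi$ is already a morphism of differential modules by hypothesis, and it is now compatible with the module actions, it is a morphism of DG-modules, completing the proof. No step here is genuinely hard; the only thing requiring care is the sign verification in the displayed computation above, which goes through because the sign $(-1)^i$ in the box-product differential depends only on the homological degree of $f_i$ in $F_\bullet$ and is therefore unaffected by applying $\phi$ to the $D$-component.
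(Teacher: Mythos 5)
Your proposal is correct and follows essentially the same route as the paper, which simply defines the product so that the square $F_\bullet \boxtimes D \to F_\bullet \boxtimes D' \to D' \to D$ commutes, i.e. $p_D = \phi^{-1}\circ p_{D'}\circ(\id\boxtimes\phi)$. Your write-up just makes explicit the verifications (that $\id\boxtimes\phi$ is a morphism of differential modules and that $\phi$ intertwines the products) that the paper leaves to the reader.
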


\begin{proof}
The induced product is defined by making the following diagram commute:
\[\begin{tikzcd}
	{F_\bullet \boxtimes D} & {F_\bullet \boxtimes D'} \\
	D & {D'}
	\arrow[from=1-1, to=2-1]
	\arrow["{1 \boxtimes \phi}", from=1-1, to=1-2]
	\arrow["{\cdot_{D'}}", from=1-2, to=2-2]
	\arrow["{\phi^{-1}}", from=2-2, to=2-1]
\end{tikzcd}\]
\end{proof}

\begin{example}\label{ex:dgmod}
Let $R = k[x_1,x_2]$ and $E$ be a rank $2$ free module on the basis $e_1 , e_2$. Let $D = \bigwedge^\bullet E$ be the free flag with differential
$$\begin{pmatrix} 0 & x_1 & x_2 & x_1^2+x_2^2 \\
0 & 0 & 0 & -x_2 \\
0 & 0 & 0 & x_1 \\
0 & 0 & 0 & 0  \\
\end{pmatrix}.$$
Then $D$ admits the structure of a DG-module over the Koszul complex $K_\bullet$ with the following product (any product not listed is understood to be $0$):
$$1 \cdot_D d = d \ \textrm{for all} \ d \in D, \quad e_1 \cdot_D 1 = e_1, \ e_2 \cdot_D 1 = e_2, $$
$$e_{12} \cdot_D 1 = e_{12} - x_1 e_1 - x_2 e_2,$$
$$e_1 \cdot_D e_2 = e_{12} - x_1 e_1 - x_2 e_2,$$
$$e_1 \cdot_D e_{12} = x_2 e_{12} - x_1 x_2 e_1 - x_2^2 e_2, \quad \textrm{and}$$
$$e_2 \cdot_D e_{12} = -x_1 e_{12} + x_1^2 e_1 + x_1 x_2 e_2.$$

\end{example}

Recall that by Theorem \ref{cor:CIequiv}, $D$ in Example \ref{ex:dgmod} isomorphic to the fold of the Koszul complex on $(x_1, x_2)$. In fact, the product given above is induced by this isomorphism. This construction works in general, that is if we have an isomorphism to a $DG$-algebra, we can obtain a $DG$-module structure in the same way. This leads to a string of immediate corollaries to Proposition \ref{prop:indAlg}.

\begin{cor}\label{cor:indAlgFromFold}
Let $D$ be an anchored free flag and assume that $D \cong \fold (F_\bullet)$ where $F_\bullet\to H(D)$ is a minimal free resolution. If $F_\bullet$ admits the structure of a DG-algebra, then $D$ admits the structure of a DG-module over $F_\bullet$.
\end{cor}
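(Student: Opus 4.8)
The plan is to deduce this directly by chaining together Observation \ref{obs:foldOfDG} and Proposition \ref{prop:indAlg}; no new computation is really required. First I would record that the fold of a resolution has the expected homology: since $F_\bullet \to H(D)$ is a minimal free resolution, one has $H_0(F_\bullet) = H(D)$ and $H_i(F_\bullet) = 0$ for $i \neq 0$, so
\[
H(\fold(F_\bullet)) = \bigoplus_{i \in \bbz} H_i(F_\bullet) = H(D).
\]
In particular $F_\bullet$ is simultaneously a minimal free resolution of $H(\fold(F_\bullet))$, so it is meaningful (in the sense of Definition \ref{def:DGmod}) to ask whether $\fold(F_\bullet)$ is a DG-module over $F_\bullet$.

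Next I would apply Observation \ref{obs:foldOfDG}: the DG-algebra structure on $F_\bullet$ endows $\fold(F_\bullet)$ with the structure of a DG-module over $F_\bullet$, the product being induced by the internal multiplication $F_\bullet \otimes_R F_\bullet \to F_\bullet$ (the fact that this furnishes a morphism of differential modules $F_\bullet \boxtimes_R \fold(F_\bullet) \to \fold(F_\bullet)$ is precisely the Leibniz identity (b) of Definition \ref{def:dga}). Finally I would invoke Proposition \ref{prop:indAlg} with $D' = \fold(F_\bullet)$ and the given isomorphism of differential modules $\phi \colon D \xrightarrow{\cong} \fold(F_\bullet)$: since $D'$ is a DG-module over the DG-algebra minimal free resolution $F_\bullet$ of $H(D') = H(D)$, the proposition transports this structure to $D$ via $f_i \cdot_D d := \phi^{-1}\big(f_i \cdot_{D'} \phi(d)\big)$, and additionally makes $\phi$ a morphism of DG-modules. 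This completes the argument.

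There is essentially no obstacle here; the only points meriting a moment's care are confirming that the hypothesis $D \cong \fold(F_\bullet)$ is an isomorphism \emph{of differential modules} (so that Proposition \ref{prop:indAlg} applies verbatim, its hypothesis being stronger than a mere quasi-isomorphism) and that the homology identification above holds (so that ``$F_\bullet$ is a minimal free resolution of $H(D')$'' is legitimate) — both immediate. The substance of the corollary is entirely contained in the two results it cites; it is isolated as a separate statement because, together with Theorem \ref{cor:CIequiv}, it will be one half of the characterization that an anchored free flag with complete intersection homology carries a DG-module structure over the Koszul complex exactly when it is isomorphic to $\fold(K_\bullet)$.
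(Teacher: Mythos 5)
Your argument is correct and is exactly the paper's proof: the paper deduces the corollary in one line from Observation \ref{obs:foldOfDG} and Proposition \ref{prop:indAlg}, precisely the two results you chain together (your extra remarks on the homology identification and on the isomorphism being one of differential modules are sound but not needed as new content).
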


\begin{proof}
This follows from Proposition \ref{prop:indAlg} and Observation \ref{obs:foldOfDG}.
\end{proof}

\begin{cor}\label{cor:imageContainCor}
Let $D$ be an anchored free flag and assume that the minimal free resolution $F_\bullet$ of $H(D)$ admits the structure of a DG-algebra. If the matrices $A_{i,0}$ satisfy $\im A_{i,0} \subset \im d_1$ for each $i \geq 2$, then $D$ is a DG-module over $F_\bullet$.
\end{cor}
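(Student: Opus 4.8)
The plan is to reduce the statement to Corollary \ref{cor:indAlgFromFold} by first upgrading the hypothesis $\im A_{i,0} \subset \im d_1$ to an actual isomorphism $D \cong \fold(F_\bullet)$. The tool for this is Lemma \ref{lem:MayaLem}, which we may apply with $m$ arbitrarily large since the containment $\im A_{i,0}^D \subset \im d_1$ is assumed for \emph{every} $i \geq 2$. So the first step is: invoke Lemma \ref{lem:MayaLem} to replace $D$ by an isomorphic differential module $D'$ with $A_{i,\ell}^{D'} = 0$ whenever $i - \ell \geq 2$. Since the only off-diagonal blocks of the differential of $D'$ that can still be nonzero are the $A_{i,i-1}^{D'} = d_i$, this says precisely that $D' = \fold(F_\bullet)$; this is exactly the criterion recorded in the discussion immediately following Lemma \ref{lem:MayaLem}.

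With $D \cong \fold(F_\bullet)$ in hand, the second step is purely formal and is bundled into Corollary \ref{cor:indAlgFromFold}: the assumed DG-algebra structure on $F_\bullet$ makes $\fold(F_\bullet)$ a DG-module over $F_\bullet$ by Observation \ref{obs:foldOfDG}, and Proposition \ref{prop:indAlg} transports this product along the isomorphism to endow $D$ with the structure of a DG-module over $F_\bullet$. Thus the corollary is literally the composite of Lemma \ref{lem:MayaLem} with Corollary \ref{cor:indAlgFromFold}, and there is essentially nothing new to compute.

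The one place that deserves a sentence of care — and the only genuine (if mild) obstacle — is justifying the phrase ``apply Lemma \ref{lem:MayaLem} for all $m$'' when the minimal free resolution $F_\bullet$ is unbounded, so that one really does obtain a single isomorphism $D \cong \fold(F_\bullet)$ rather than a sequence of partial ones. Here I would make explicit what is already implicit in the proof of Lemma \ref{lem:MayaLem}: clearing the diagonal $i-\ell = t$ uses only column operations supported on the diagonals $i-\ell \ge t$, so it neither re-introduces a nonzero block on a previously cleared diagonal nor affects, at any fixed flag level, more than finitely many blocks; hence the successive changes of coordinates compose to a well-defined automorphism of $\bigoplus_i F_i$ carrying $D$ to $\fold(F_\bullet)$. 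With that remark recorded, the proof is a direct appeal to results already established.
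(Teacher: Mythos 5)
Your proposal is correct and follows exactly the paper's route: Lemma \ref{lem:MayaLem} upgrades the hypothesis $\im A_{i,0}\subset \im d_1$ to $D\cong\fold(F_\bullet)$, and then Corollary \ref{cor:indAlgFromFold} (via Observation \ref{obs:foldOfDG} and Proposition \ref{prop:indAlg}) transfers the DG-module structure. Your extra remark about iterating the column operations coherently in the unbounded case is a reasonable point of care that the paper leaves implicit, but it does not change the argument.
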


\begin{proof}
If $\im A_{i,0} \subset \im d_F$, then $D \cong \fold (F_\bullet)$ by Lemma \ref{lem:MayaLem}, so employ Corollary \ref{cor:indAlgFromFold}.
\end{proof}

\begin{cor}
Let $D$ be a degree $0$ anchored free flag with $H(D) \cong k$. Then $D$ admits the structure of a DG-module over the minimal free resolution of $k$.
\end{cor}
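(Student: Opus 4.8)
The plan is to deduce the corollary directly from Corollary~\ref{cor:imageContainCor}. That result reduces the existence of a DG-module structure on an anchored free flag $D$ over the minimal free resolution $F_\bullet$ of $H(D)$ to two things: that $F_\bullet$ itself carries the structure of a DG-algebra, and that $\im A_{i,0}^D\subseteq\im d_1$ for every $i\ge 2$. So it suffices to verify these two facts when $H(D)\cong k$ and $D$ has degree $0$.

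For the first, recall that the minimal free resolution of the residue field $k$ of a graded local ring always admits the structure of an associative, graded-commutative DG-algebra: when $R$ is regular this is just the Koszul complex, and in general one takes Tate's acyclic closure of $k$, which is a DG-algebra by construction and is a minimal resolution by Gulliksen's theorem. Since $R$ is Noetherian each of its terms is a finitely generated free $R$-module, so this is a DG-algebra in the sense of Definition~\ref{def:dga}; the resolution may be unbounded, but neither the statement nor the proof of Corollary~\ref{cor:imageContainCor} uses boundedness.

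For the second, write $D=\bigoplus_{i\ge 0}F_i$ as an anchored free flag built, via Theorem~\ref{thm:danMikeThm}, on the minimal free resolution $F_\bullet$ of $k$, so that $A_{i,i-1}^D=d_i$. The module $F_0$ is generated in the single degree $\deg k$, while for $i\ge 1$ every minimal generator of $F_i$ has strictly larger degree: by minimality $\im d_i\subseteq\m F_{i-1}$ for all $i\ge 1$, and since $\ker d_{i-1}=\im d_i$ an easy induction pushes the minimal generating degrees of the $F_i$ up by at least one at each step. Because $D\in\dm(0,R)$, each block $A_{i,0}^D\colon F_i\to F_0$ is homogeneous of degree $0$, so it carries every minimal generator of $F_i$ into $\m F_0$, whence $\im A_{i,0}^D\subseteq\m F_0$. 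Finally $\coker d_1=H_0(F_\bullet)=k$ forces $\im d_1=\m F_0$, so $\im A_{i,0}^D\subseteq\im d_1$ for all $i\ge 2$, and Corollary~\ref{cor:imageContainCor} applies.

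The only ingredient brought in from outside the paper is the Tate--Gulliksen fact that the acyclic closure of $k$ is a minimal DG-algebra resolution; everything else is the short degree count above, so I expect no real obstacle here. (When $R$ is regular one can even bypass Corollary~\ref{cor:imageContainCor}: since $a=0\ne 2$, Corollary~\ref{cor:koszulDegs} gives $D\cong\fold(K_\bullet)$ for $K_\bullet$ the Koszul complex resolving $k$, and then Corollary~\ref{cor:indAlgFromFold} concludes.)
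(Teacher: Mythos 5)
Your proof is correct and follows the paper's own route: the paper likewise deduces the corollary from Corollary \ref{cor:imageContainCor} by observing that the degree-$0$ hypothesis forces the entries of each $A_{i,0}$ to lie in $\m$, hence $\im A_{i,0}\subseteq \m F_0=\im d_1$. Your only addition is to make explicit the DG-algebra structure on the minimal free resolution of $k$ (Koszul complex in the regular case, Tate's acyclic closure with Gulliksen's minimality in general), which the paper leaves implicit; this is a reasonable and accurate elaboration rather than a different argument.
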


\begin{proof}
The asssumption that $D$ has degree $0$ implies that each matrix $A_{i,0}$ has entries in $\m$, so employ Corollary \ref{cor:imageContainCor}.
\end{proof}

The above string of corollaries are all proved by reducing to the case that the differential module being considered may be realized as the folding of a DG-algebra resolution. The following theorem shows that this assumption is not only sufficient, but also \emph{necessary} for free flags with complete intersection homology.  

\begin{theorem}\label{thm:noDGmod}
Let $D$ be an anchored free flag with $H(D)$ a complete intersection. Let $K_\bullet$ denote the Kosul complex resolving $H(D)$. Then $D$ is a DG-module over $K_\bullet$ if and only if $D \cong \fold (K_\bullet)$.
\end{theorem}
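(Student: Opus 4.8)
The plan is to prove the non-trivial direction: assuming $D$ is a DG-module over $K_\bullet$, deduce $D \cong \fold(K_\bullet)$. By Theorem~\ref{cor:CIequiv}, it suffices to show that $\im A_{i,0} \subset \im d_1$ for all $i \geq 2$, where $d_1$ is the first Koszul differential. So the whole argument reduces to extracting, from the DG-module axioms, enough relations to force the top-row blocks $A_{i,0}$ into the image of $d_1$.

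First I would set up notation compatibly with the rest of the paper: write $H(D) = R/\mfa$ with $\mfa = (g_1,\dots,g_c)$ a regular sequence, let $E$ be free of rank $c$ with basis $e_1,\dots,e_c$, so $K_\bullet = \bigwedge^\bullet E$ with $d_1 = \sum g_\ell e_\ell^*$ acting by contraction, and let $p\colon K_\bullet \boxtimes_R D \to D$ be the structure map, written $z \cdot_D x$. Since $D$ is an anchored free flag, its underlying module is $\bigoplus_i F_i$ with $F_i = \bigwedge^i E$ and $A_{i,i-1} = d_i^K$. The key is to identify, for a fixed $i$, the element $e_{[c]} \cdot_D 1_{F_i}$ — i.e. multiply the generator $1 \in F_0 = \bigwedge^0 E$ (or rather a basis element of $F_i$) by the top exterior class — and chase the condition that $p$ commutes with differentials. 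Concretely, I would look at the identity $d^D\bigl(z \cdot_D x\bigr) = d^K(z)\cdot_D x \pm z \cdot_D d^D(x)$ applied to well-chosen $z \in \bigwedge^k E$ and $x \in F_i$, and extract the component landing in $F_0 = R$. Because $K_\bullet$ is a resolution of $R/\mfa$, the unit axiom forces $1 \cdot_D x = x$, and multiplication by $e_\ell$ interacts with the Koszul differential so that $d^D(e_\ell \cdot_D x) - e_\ell \cdot_D d^D(x) = g_\ell x$ up to sign; inductively, products with $e_J$ for $|J| = k$ relate the block $A_{i,i-k}$ to $A_{i-1,i-k-1}$ and lower terms, all modulo $\mfa$. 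The goal is to run this induction (on $k = i-j$, as in the proof of Theorem~\ref{thm:nonCHar2}) down to $k = i$, where the $F_0$-component of the relevant DG-module identity reads $A_{i,0}(x) \equiv (\text{something in }\mfa F_0) = \mfa$, and since $\mfa = \im d_1$ this is exactly $\im A_{i,0} \subset \im d_1$.

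The forward direction ($D \cong \fold(K_\bullet) \Rightarrow$ DG-module) is already essentially Corollary~\ref{cor:indAlgFromFold} together with the fact that the Koszul complex is a DG-algebra, so that half is a one-line citation. The real content, and the step I expect to be the main obstacle, is organizing the induction in the hard direction: one must be careful that a DG-module structure only gives a \emph{chain map} $p$, not a strictly associative or unital action on the nose beyond what Definition~\ref{def:DGmod} provides, so the relations one extracts are only up to boundaries in $D$, and I will need to check that these boundary corrections stay within $\mfa F_0$ and do not reintroduce terms outside $\im d_1$. I would handle this by tracking everything modulo $\mfa$ from the start — i.e. tensoring the $F_0$-components of all identities with $R/\mfa$ and using that $d_i^K \otimes R/\mfa = 0$ (which is what makes $\mfa$ a complete intersection usable here, exactly as in the proof of Theorem~\ref{cor:CIequiv}) — so that the inductive step becomes a clean statement about maps into $R/\mfa$ vanishing. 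Once $\im A_{i,0} \subset \mfa = \im d_1$ is established for every $i \geq 2$, Theorem~\ref{cor:CIequiv} closes the proof.
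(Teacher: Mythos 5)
Your high-level skeleton does match the paper's: the easy direction is Corollary \ref{cor:indAlgFromFold}, and in the hard direction you aim to show $\im A_{i,0} \subset \im d_1 = \mfa$ for all $i \geq 2$ and then invoke Theorem \ref{cor:CIequiv}, extracting this from the chain-map (Leibniz) identity for $p$ reduced modulo $\mfa$. But there is a genuine gap at the center of the argument: the identity $d^D(z \cdot_D x) = d^K(z)\cdot_D x \pm z \cdot_D d^D(x)$ only constrains $d^D$ on elements in the image of $p$, i.e.\ on products, and your sketch never explains how such products reach the flag basis elements of $F_i$ whose images under $A_{i,0}$ you need to control. The paper supplies exactly this bridge: taking $i$ minimal with top-row block nonvanishing modulo $\mfa$ and using Lemma \ref{lem:MayaLem} to assume $A_{k,j} = 0$ for all $1 < k-j < i$, it shows---via exactness of multiplication by $f_1$, i.e.\ exactness of the Koszul complex---that the product may be arranged so that $e_I \cdot_D e_J = e_I \w e_J + t$ with $t$ of strictly lower flag degree. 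Only then does writing $e_I = e_\ell \cdot_D e_{I\backslash \ell} + t_{i-1}$, applying $d^D$, expanding by the Leibniz rule, and tensoring with $R/\mfa$ force $A_{i,0}(e_I) \in \mfa$, contradicting minimality; the correction terms $d^D(e_\ell)\cdot_D e_{I\backslash\ell}$, $e_\ell \cdot_D d^D(e_{I\backslash\ell})$, and $d^D(t_{i-1})$ all land in $\mfa D$ precisely because the intermediate blocks were first cleared. Your proposed induction ``relating $A_{i,i-k}$ to $A_{i-1,i-k-1}$ modulo $\mfa$'' presupposes this identification of products with basis elements but does not establish it, and without it the induction cannot start: a DG-module structure says nothing directly about $d^D$ on basis vectors of $F_i$ that are not exhibited as products.

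A smaller but related point: Definition \ref{def:DGmod} only requires a morphism of differential modules $p \colon K_\bullet \boxtimes_R D \to D$; it imposes neither unitality nor associativity. So your appeal to ``the unit axiom forces $1 \cdot_D x = x$,'' and the resulting identity $d^D(e_\ell \cdot_D x) - e_\ell \cdot_D d^D(x) = \pm g_\ell\, x$ rather than $\pm g_\ell\,(1\cdot_D x)$, is not available as stated. This particular slip is harmless modulo $\mfa$, since $g_\ell (1 \cdot_D x) \in \mfa D$ in any case, but it is symptomatic of the main issue: the argument has to be run, as in the paper, without assuming the action behaves like a genuine unital algebra action, relying instead on exactness of $K_\bullet$ to pin down the products up to lower flag-degree terms.
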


\begin{proof}
$\impliedby$: This is clear.

$\implies$: Choose $i := \min \{ i > 1 \mid f_i \neq 0 \}$; if no such $i$ exists, then $D \cong \fold (K_\bullet)$ by Lemma \ref{lem:MayaLem} and there is nothing to prove, so assume $i$ exists. Otherwise, Lemma \ref{lem:MayaLem} implies that we may assume $A_{k,j} = 0$ for all $k-j < i$. In particular, for all indexing sets $I$ of size $i$, one has
$$d (e_I ) = f_1 (e_I) + f_i (e_I).$$
Assume that $D$ has a DG-module structure over $K_\bullet$. By assumption $H(D) \cong R/ \mfa$ for some ideal $\mfa$ that is generated by a regular sequence, and $f_1 \otimes R/\mfa = 0$. One can choose the algebra structure such that $e_I \cdot e_J = e_I \w e_J + t_{i-1}$ for all $|I| + |J| \leq i$, where $t_{i-1}$ is some element of $\bigoplus_{j \leq i-1} \bigwedge^j E$. This is because exactness of multiplication by $f_1$ forces $e_I \w e_J - p_{|I|+|J|}(e_I \cdot_D e_J)$ to be a boundary in the Koszul complex induced by $f_1$, where $p_{|I|+|J|} \colon D \to \bigwedge^{|I|+|J|} E$ denotes the projection onto the corresponding direct summand. Let $e_I$ be any basis vector such that $f_i (e_I) \otimes R/\mfa \neq 0$ (such an element must exist by selection of $i$), where $|I|= i$. Let $\ell$ be the first element of $I$ and notice that:
\begingroup\allowdisplaybreaks
\begin{align*}
    f_1 (e_I) + f_i (e_I) &= d(e_I) \\
    &= d(e_\ell \cdot_D e_{I \backslash \ell} + t_{i-1} ) \\
    &= d(e_\ell) \cdot_D e_{I \backslash \ell} - e_\ell \cdot_D d(e_{I \backslash \ell}) + d(t_{i-1}).
\end{align*}
\endgroup
Tensoring the above relation with $R / \mfa$, it follows that $f_i (e_I) \in \mfa$, which is a contradiction to the assumptions. It follows that no DG-module structure can exist.
\end{proof}

In view of Theorem \ref{thm:noDGmod}, it follows that DG-module structures over the minimal free resolution of the homology are actually quite rare. Indeed, after running many examples it seems that the only time such a DG-module structure exists is if the free flag arises as the fold of the minimal free resolution, indicating that DG-module structures can be used to distinguish free flags that are in the isomorphism class of a complex. It is an interesting question as to whether there exists a family of structures, similar to a DG-module structure, that can be used to detect the isomorphism class of any anchored free flag. We conclude with the following (likely easier) question:

\begin{question}\label{question:nontrivialDG}
Does there exist an anchored free flag $D$ admitting the structure of a module over the minimal free resolution of its homology $F_\bullet$ that is \emph{not} isomorphic to $\fold (F_\bullet)$? 
\end{question}

\bibliographystyle{amsalpha}
\bibliography{biblio}
\addcontentsline{toc}{section}{Bibliography}

\end{document}